\theoremstyle{plain}
\newtheorem{thm}{Theorem}[section]
\newtheorem{cor}[thm]{Corollary}
\newtheorem{lem}[thm]{Lemma}
\newtheorem{prop}[thm]{Proposition}
\theoremstyle{definition}
\newtheorem{rem}[thm]{Remark}
\def \r{\mbox{${\mathbb R}$}}
\DeclareMathOperator{\trace}{trace}
\begin{document}
	
\title{Triharmonic curves in 3-dimensional homogeneous spaces}

\author{S. Montaldo}
\address{Universit\`a degli Studi di Cagliari\\
Dipartimento di Matematica e Informatica\\
Via Ospedale 72\\
09124 Cagliari, Italia}
\email{montaldo@unica.it}

\author{A. P\'ampano}
\address{Department of Mathematics\\
University of the Basque Country\\
Aptdo. 644\\
48080, Bilbao, Spain.}
\email{alvaro.pampano@ehu.eus}
\date{\today}

\begin{abstract}
We first prove that, unlike the biharmonic case, there exist triharmonic curves with nonconstant curvature in a suitable Riemannian manifold of arbitrary dimension. We then give the complete classification of triharmonic curves in surfaces with constant Gaussian curvature. Next, restricting to curves in a 3-dimensional Riemannian manifold, we study the family of triharmonic curves with constant curvature, showing that they are Frenet helices.
In the last part, we give the full classification of triharmonic Frenet helices in space forms and in Bianchi-Cartan-Vranceanu spaces.
\end{abstract}

\thanks{Work partially supported by Fondazione di Sardegna (Project STAGE) and Regione Autonoma della Sardegna (Project KASBA). The second author has been partially supported by MINECO-FEDER grant PGC2018-098409-B-100, Gobierno Vasco grant IT1094-16 and Programa Posdoctoral del Gobierno Vasco, 2018. He also wants to thank the Department of Mathematics and Computer Science of the University of Cagliari for the warm hospitality during his stay.}
\subjclass[2010]{Primary: 58E20; Secondary: 53C42, 53C43.}

\keywords{BCV Spaces, Frenet Helices, Space Forms, Triharmonic Curves.}
\maketitle

\section{Introduction}

An arc-length parametrized curve $\gamma: I\rightarrow M^n$ from an open interval $I\subset\mathbb{R}$ to a Riemannian manifold of dimension $n$ is called \emph{triharmonic} if
\begin{equation}
\nabla_T^5 T+R^M(\nabla_T^3 T,T)\,T-R^M(\nabla_T^2 T, \nabla_T T)\,T=0 \nonumber
\end{equation}
where $T$ is the unit tangent vector field of $\gamma$, $\nabla$ denotes the Levi-Civita connection of $M^n$ and $R^M$ is the Riemannian curvature tensor of $M^n$.

Triharmonic curves represent the case $r=3$ in a general theory of $r$-harmonic (polyharmonic) curves. The theory of these curves can be considered as the 1-dimensional case of $r$-harmonic maps, first introduced in \cite{ES2}, where Eells and Sampson, soon after their celebrated paper on harmonic mapping \cite{ES}, suggested the idea of studying critical points of higher order energies as a possible generalization of harmonic maps. For an updated account on  higher order energies we recommend the interested reader to see \cite{BMRO}. 

The case $r=2$, that is of biharmonic curves, is well studied and it is well known (see, for example, \cite{CMOP}) that if we denote by $\kappa(s)=\lVert\nabla_T T\rVert$  the \emph{curvature} of an arc-length parametrized curve $\gamma:I\rightarrow M^n$ in a Riemannian manifold $M^n$, 
then if $\gamma$ is proper biharmonic the curvature $\kappa$ is constant. 

In the first part of the paper we investigate the possibility of constructing triharmonic curves in a Riemannian manifold with nonconstant curvature and we obtain the following result.

\begin{thm}\label{existence} For any $n>1$ there exist a triharmonic curve with nonconstant curvature in $S\times\mathbb{R}^{n-2}$ where $S$ is locally a ruled surface in $\mathbb{R}^3$, parametrized by $x(s,t)=\alpha(s)+t N(s)$, and $\alpha(s)$ is, up to rigid motions, the only curve in $\mathbb{R}^3$ with curvature and torsion given by  
$$
\kappa(s)=\frac{\sqrt{5}}{s}\,,\quad \tau(s)=\frac{3\sqrt{7}}{2s}\,.
$$
\end{thm}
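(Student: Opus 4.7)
The first step is a reduction: view $\alpha$ as a curve in $S\times\{0\}\subset S\times\mathbb R^{n-2}$. Since $\alpha$ has no component in the Euclidean factor, and $\mathbb R^{n-2}$ is flat, every Levi-Civita covariant derivative $\nabla_T^k T$ and every Riemann curvature term $R^M(\cdot,\cdot)\cdot$ appearing in the triharmonic equation reduces to its intrinsic analogue on $S$. So the problem collapses to verifying triharmonicity of $\alpha$ inside the 2-dimensional Riemannian manifold $S$.

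The second step is to extract the geometric data of $S$ along $\alpha$. Differentiating $x(s,t)=\alpha(s)+tN(s)$ and applying the Frenet equations gives $x_s=(1-t\kappa)T+t\tau B$ and $x_t=N$; at $t=0$ these span $T$ and $N$, so $S$ is regular along $\alpha$ and the unit normal is exactly the binormal $B$. Because $\nabla^{\mathbb R^3}_T T=\kappa N$ is tangent to $S$, the curve $\alpha$ is an asymptotic line of $S$, so $\nabla^{S}_T T=\kappa N$; the identity $\nabla^{S}_T N=-\kappa T$ then follows by projecting $-\kappa T+\tau B$ onto $S$. Consequently $\{T,N\}$ is an intrinsic orthonormal frame on $S$ along $\alpha$ with intrinsic curvature equal to the ambient $\kappa$. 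A short second-fundamental-form calculation yields the Gaussian curvature along $\alpha$:
\begin{equation*}
K(s)=-\tau(s)^{2}.
\end{equation*}

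The third step is to grind out $\nabla_T^k T=A_k N+B_k T$ for $k=2,\ldots,5$ using only $\nabla^{S}_T T=\kappa N$, $\nabla^{S}_T N=-\kappa T$, and then plug into the triharmonic equation using the surface curvature identity $R^{S}(X,Y)Z=K(\langle Y,Z\rangle X-\langle X,Z\rangle Y)$. Separating the $T$- and $N$-components yields two ODEs, one purely in $\kappa$ and one coupling $\kappa$ with $K$. Substituting the ansatz $\kappa(s)=c/s$ into the tangential equation, scale-invariance forces $c^{2}=5$; the normal equation then becomes a linear relation for $\tau^{2}$ whose unique positive solution is $\tau^{2}(s)=63/(4s^{2})$, giving precisely the advertised values.

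The existence and uniqueness (up to rigid motions of $\mathbb R^3$) of a curve $\alpha$ with the prescribed $\kappa,\tau$ on $(0,\infty)$ is the fundamental theorem for space curves; combined with the reduction of the first step this gives the required triharmonic curve with nonconstant curvature in $S\times\mathbb R^{n-2}$ for every $n>1$. The main obstacle is the bookkeeping of the fifth-order iteration $\nabla_T^{5}T$ together with the tangent/normal split of the three curvature-containing terms: the computation is purely mechanical but must be done carefully, since it is the cancellation of exact numerical coefficients that singles out the values $\sqrt 5$ and $3\sqrt 7/2$.
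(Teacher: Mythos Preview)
Your proof is correct and follows essentially the same route as the paper: reduce to the surface $S$ via the product structure, identify the intrinsic geodesic curvature of $\alpha$ in $S$ with the ambient $\kappa$ and the Gaussian curvature along $\alpha$ with $-\tau^{2}$, split the triharmonic equation into tangential and normal components, and solve. The only tactical difference is that the paper integrates the tangential ODE twice (obtaining a first integral $5(\kappa')^{2}=c_{2}-2c_{1}\kappa^{-1}+\kappa^{4}$ and then setting $c_{1}=c_{2}=0$) whereas you plug in the scale-invariant ansatz $\kappa=c/s$ directly; for the purpose of this existence theorem the two are equivalent.
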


Theorem~\ref{existence} is achieved by an analysis of triharmonic curves parametrized by arc-length in a surface $S$. This analysis also permits us to give the classification of triharmonic curves in surfaces with constant Gaussian curvature (Theorem~\ref{all}).

In the next part, we shall investigate triharmonic curves in a Riemannian manifold of dimension 3. In this case the general study of triharmonic curves is more complicated and we shall restrict ourselves to the study of triharmonic curves with constant curvature. We first prove that triharmonic curves with constant curvature  in a Riemannian manifold of dimension 3 are Frenet helices (Corollary~\ref{helices}). The latter result enables us to tackle the classification problem of triharmonic curves with constant curvature in homogeneous 3-dimensional manifolds. 

We recall that, among homogeneous 3-dimensional manifolds, there are the $3$-space forms $M^3(\rho)$ when the isometry group is of maximal dimension, that is 6. Triharmonic curves with constant curvature in space forms were studied by Maeta in \cite{M}. Here, we recover Maeta's result and we slightly improve on it by showing, in Proposition~\ref{pro:constantktau}, that for a triharmonic curve  the torsion is constant if and only if the curvature is constant and, consequently, Maeta's examples are the only triharmonic curves with either constant curvature or constant torsion.

On the other hand, homogeneous 3-dimensional manifolds with the isometry group of dimension $4$ can be locally described as Bianchi-Cartan-Vranceanu spaces $M(a,b)$ with $4a\neq b^2$. Similar to what happens in 3-dimensional space forms $M^3(\rho)$, if the torsion of a triharmonic curve is identically zero, then we prove that its curvature is constant. We then classify triharmonic curves with zero torsion in Theorem~\ref{46}.

Finally, in Theorems~\ref{others} and \ref{h3} we give the full classification and their explicit parametrizations of triharmonic helices in Bianchi-Cartan-Vranceanu spaces $M(a,b)$ with $4a\neq b^2$. It turns out that these triharmonic curves can be seen as geodesics of suitable Hopf cylinders (see Corollary~\ref{hopf}).

\section{Triharmonic curves in riemannian manifolds}

\emph{Harmonic maps} $\varphi\,: (\widetilde{M},h)\rightarrow (M,g)$ between Riemannian manifolds are the critical points of the energy functional
\begin{equation}
E(\varphi)=\frac{1}{2}\int_{\widetilde{M}} \lVert d\varphi \rVert ^2 v_h\, . \nonumber
\end{equation}
The corresponding Euler-Lagrange equation is given by the vanishing of the tension field
\begin{equation}
\tau(\varphi)=-d^*d\varphi=\trace \nabla d\varphi\,, \nonumber
\end{equation}
where $d$ is the exterior differentiation and $d^*$ is the codifferentiation.
In \cite{ES2}, Eells and Sampson suggested to study \emph{$r$-harmonic maps} (or simply, \emph{polyharmonic maps}) as the critical points of the \emph{$r$-energy functional} defined by
\begin{equation}\label{eq:ESenergy}
E_r^{ES}(\varphi)=\frac{1}{2}\int_{\widetilde{M}} \lVert (d+d^*)^r \varphi\rVert^2 v_h\, , \quad r\geq 1,
\end{equation}
for $\varphi\in\mathcal{C}^\infty (\widetilde{M},M)$. 
When the dimension of $\widetilde{M}$ is one, the $r$-energy functional \eqref{eq:ESenergy} coincides with 
another higher order energy functional,  first studied by Wang in \cite{W}  and by Maeta in \cite{Maeta1}, that, when $r=2s+1$, $s \geq 1$, takes the form
\begin{equation}\label{eq:wangenergy}
E_{2s+1}(\varphi)= \frac{1}{2} \int_{\widetilde{M}} \, \langle\,d\underbrace{(d^* d) \ldots (d^* d)}_{s\, {\rm times}}\varphi, \,d\underbrace{(d^* d) \ldots (d^* d)}_{s\, {\rm times}}\varphi\,\rangle\, v_h\, ,
\end{equation}
while if $r=2s$, $s \geq 1$, is
\begin{eqnarray}\label{2s-energia}
E_{2s}(\varphi)&=& \frac{1}{2} \int_{\widetilde{M}} \, \langle \, \underbrace{(d^* d) \ldots (d^* d)}_{s\, {\rm times}}\varphi, \,\underbrace{(d^* d) \ldots (d^* d)}_{s\, {\rm times}}\varphi \, \rangle\, v_h \,.
\end{eqnarray}

For a complete description of the relations between the functional \eqref{eq:ESenergy} and the functionals \eqref{eq:wangenergy} and \eqref{2s-energia} we refer the reader to \cite{BMRO}. 

It follows that, when $\gamma: I\rightarrow M$ is a curve parametrized by arc-length, from an open interval $I\subset\mathbb{R}$ to a Riemannian manifold, putting $\gamma' =T$, the Euler-Lagrange equations of \eqref{eq:wangenergy} and \eqref{2s-energia}, computed by Wang,  reduces to the  equation  
\begin{equation}\label{eq:rharmoniccurveEL}
\tau_r(\gamma)=\nabla_{T}^{2r-1} T+\sum_{\ell=0}^{r-2} (-1)^{\ell} R^M\left(\nabla_T^{2r-3-\ell} T, \nabla_T ^{\ell} T\right) \,T=0\, ,\quad r\geq 1\,.
\end{equation}
Solutions of \eqref{eq:rharmoniccurveEL} are called $r$-harmonic curves. In particular, any harmonic curve is a polyharmonic curve, for any $r\geq 1$. We say that a $r$-harmonic curve is \emph{proper} if it is not harmonic. Therefore, the main interest is to find and classify proper $r$-harmonic curves.

Throughout this paper, we shall focus on \emph{triharmonic curves} (polyharmonic curves for $r=3$), which are the arc-length parametrized curves solutions of \eqref{eq:rharmoniccurveEL} for $r=3$, that is solutions of the following equation
\begin{equation}
\tau_3(\gamma)=\nabla_T^5 T+R^M(\nabla_T^3 T, T)\,T-R^M(\nabla_T^2T,\nabla_T T)\,T=0\,. \label{3-tension}
\end{equation}

Notice that, as mentioned above, every harmonic curve  is a triharmonic curve. However, as proved by Maeta in \cite{M},  biharmonic curves (polyharmonic curves for $r=2$) are not necessary triharmonic curves and, viceversa, triharmonic curves do not need to be biharmonic. Thus the study of triharmonic curves could be, in general, a completely different problem to that of biharmonic curves.

\section{Triharmonic curves in a surface}

We begin by proving the existence of a surface $S$ in $\r^3$ admitting proper triharmonic curves with nonconstant curvature. We shall denote the metric on $S$ by $\langle,\rangle$.

Let $\gamma(s)$ be an arc-length parametrized curve immersed in a surface $S$. The vector field $T=\gamma'$ is the unit tangent to $\gamma$, while we denote by $N_S=JT$ its unit normal. Here, $J$ is the counter-clockwise rotation by an angle $\pi/2$ defined in the tangent bundle of $S$. Then, if $\nabla$ denotes the Levi-Civita connection of $S$, the following Frenet-type equation holds
\begin{equation}\label{FTS}
\nabla_T T=\kappa_g(s) N_S\,,
\end{equation}
where $\kappa_g(s)$ is the \emph{geodesic curvature} of $\gamma$.

Next, looking at the tangent and normal components of \eqref{3-tension}, we obtain the following characterization of triharmonic curves in surfaces.

\begin{prop} An arc-length parametrized curve $\gamma(s)$ immersed in a surface $S$ is a triharmonic curve if and only if its geodesic curvature is a solution of the following system of differential equations
\begin{eqnarray}
\kappa_g\kappa'''_g +2\kappa'_g\kappa''_g-2\kappa^3_g\kappa'_g&=&0\,,\label{eq1}\\
\kappa^{(4)}_g-15\kappa_g\left(\kappa'_g\right)^2-10\kappa_g^2\kappa''_g+\kappa_g^5+K_S\left(\kappa''_g-2\kappa_g^3\right)&=&0\,.\label{eq2}
\end{eqnarray}
where $K_S=\langle R^S\left(T,N_S\right)N_S,T\rangle$ is the Gaussian curvature of $S$ along $\gamma$. Here, $\left(\,\right)'$ denotes the derivative with respect to the arc-length parameter $s$.
\end{prop}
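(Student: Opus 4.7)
The plan is to derive the system \eqref{eq1}--\eqref{eq2} by inserting the surface Frenet formulas into the triharmonic equation \eqref{3-tension} and separating the tangential and normal components.

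First, I would record the companion Frenet relation $\nabla_T N_S=-\kappa_g T$, which follows from \eqref{FTS} and the fact that $J$ is a $\nabla$-parallel endomorphism of $TS$ (or, equivalently, by differentiating $\langle T,N_S\rangle=0$ and $\langle N_S,N_S\rangle=1$). Using these two identities I would then compute $\nabla_T^k T$ for $k=2,3,4,5$ by induction, expressing each iterate in the orthonormal basis $\{T,N_S\}$ as $\nabla_T^k T=\alpha_k T+\beta_k N_S$, with $\alpha_k,\beta_k$ universal polynomial expressions in $\kappa_g$ and its derivatives. The first iterates read
\begin{align*}
\nabla_T^2 T &= -\kappa_g^2\,T+\kappa_g'\,N_S,\\
\nabla_T^3 T &= -3\kappa_g\kappa_g'\,T+\bigl(\kappa_g''-\kappa_g^3\bigr)N_S,\\
\nabla_T^4 T &= \bigl(\kappa_g^4-4\kappa_g\kappa_g''-3(\kappa_g')^2\bigr)T+\bigl(\kappa_g'''-6\kappa_g^2\kappa_g'\bigr)N_S,
\end{align*}
and a further differentiation produces $\nabla_T^5 T$; the coefficient of $T$ simplifies, after collecting terms, to $-5\bigl(\kappa_g\kappa_g'''+2\kappa_g'\kappa_g''-2\kappa_g^3\kappa_g'\bigr)$, and the coefficient of $N_S$ to $\kappa_g^{(4)}-15\kappa_g(\kappa_g')^2-10\kappa_g^2\kappa_g''+\kappa_g^5$.

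Next I would handle the two curvature terms. Since $S$ is two-dimensional, the curvature tensor takes the universal form
$$
R^S(X,Y)Z=K_S\bigl(\langle Y,Z\rangle X-\langle X,Z\rangle Y\bigr),
$$
so for any vector field $V$ along $\gamma$ one has $R^S(V,T)T=K_S\bigl(V-\langle V,T\rangle T\bigr)$, i.e.\ $K_S$ times the $N_S$-component of $V$. Applying this to $V=\nabla_T^3 T$ gives $R^S(\nabla_T^3 T,T)T=K_S(\kappa_g''-\kappa_g^3)N_S$. For the remaining term I would use the orthogonal decompositions of $\nabla_T^2 T$ and $\nabla_T T$ already computed: since $\nabla_T T=\kappa_g N_S$ is purely normal and the tangential part of $\nabla_T^2 T$ is $-\kappa_g^2 T$, the same two-dimensional curvature identity yields $R^S(\nabla_T^2 T,\nabla_T T)T=K_S\kappa_g^3\,N_S$.

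Substituting these into \eqref{3-tension} and reading off the $T$- and $N_S$-components, the tangential part is proportional to $\kappa_g\kappa_g'''+2\kappa_g'\kappa_g''-2\kappa_g^3\kappa_g'$, giving \eqref{eq1}, while the normal part is
$$
\kappa_g^{(4)}-15\kappa_g(\kappa_g')^2-10\kappa_g^2\kappa_g''+\kappa_g^5+K_S(\kappa_g''-\kappa_g^3)-K_S\kappa_g^3,
$$
which collapses to \eqref{eq2}. The converse is automatic: if both components vanish, so does $\tau_3(\gamma)$. The only mildly laborious step is the bookkeeping in the computation of $\nabla_T^5 T$; everything else is forced by the Frenet frame and the scalar nature of the sectional curvature on a surface.
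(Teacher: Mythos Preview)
Your proposal is correct and follows essentially the same approach as the paper: both proofs insert the surface Frenet relations into \eqref{3-tension} and read off the tangential and normal components. The paper simply records the outcome of the ``long straightforward computation'', while you spell out the iterates $\nabla_T^kT$ and make explicit use of the two-dimensional form of the curvature tensor to evaluate the $R^S$-terms; this is exactly the bookkeeping the paper suppresses.
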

\begin{proof} Triharmonic curves on $S$ are the arc-length parametrized curves solutions of \eqref{3-tension}. Applying \eqref{FTS} as many times as needed and after a long straightforward computation we obtain that \eqref{3-tension} can be written as
\begin{eqnarray*}
-5\left(\kappa_g\kappa_g'''+2\kappa_g'\kappa_g''-2\kappa_g^3\kappa_g'\right)T+&&\\
\left(\kappa^{(4)}_g-15\kappa_g\left[\kappa'_g\right]^2-10\kappa_g^2\kappa''_g+\kappa_g^5+K_S\left[\kappa''_g-2\kappa_g^3\right]\right)N_S&=&0\,,
\end{eqnarray*}
obtaining the desired result.
\end{proof}

We now proceed with the construction of a surface $S$ admitting triharmonic curves with nonconstant curvature.

Let $\alpha(s)$ be an arc-length parametrized curve of $\mathbb{R}^3$ with curvature  given by $\kappa(s)=\lVert\alpha''(s)\rVert$. If $\kappa(s)\neq 0$, that is, if $\alpha(s)$ is not a line, then the torsion of $\alpha(s)$ is
$$\tau(s)=\frac{\det\left(\alpha',\alpha'',\alpha'''\right)}{\lVert \alpha'\times\alpha''\rVert^2}=\frac{\det\left(\alpha',\alpha'',\alpha'''\right)}{\kappa^2(s)}\,,$$
where $\times$ denotes the usual vector product. Do not confuse the notation with the tension field $\tau(\varphi)$ defined in \S 2. For a non-linear curve $\alpha(s)$ in $\mathbb{R}^3$, we denote the usual Frenet frame along $\alpha$ by
$$\{T(s)=\alpha'(s), N(s)=\alpha''(s)/\kappa(s),B(s)=T(s)\times N(s)\}\,$$
where $N$ and $B$ are the unit normal and unit binormal to $\alpha$, respectively.

Consider now the ruled surface $S$ immersed in $\mathbb{R}^3$ defined by the local parametrization $x(s,t)=\alpha(s)+t N(s)$. Then, the Gaussian curvature along $\alpha(s)$ satisfies $K_S\left(\alpha(s)\right)=-\tau^2(s)$. Note also that the geodesic curvature of $\alpha(s)$, as a curve in $S$,  is, up to a sign, $\kappa(s)$, the curvature of $\alpha(s)$ as a curve in $\mathbb{R}^3$. Thus, after a change of orientation in $S$, if necessary, we can assume without loss of generality that $\kappa_g(s)=\kappa(s)$. Using this in equations \eqref{eq1}-\eqref{eq2}, we have that $\alpha(s)$ is a triharmonic curve in $S$ if and only if
\begin{eqnarray}
\kappa\kappa''' +2\kappa'\kappa''-2\kappa^3\kappa'&=&0\,,\label{eq11}\\
\kappa^{(4)}-15\kappa\left(\kappa'\right)^2-10\kappa^2\kappa''+\kappa^5-\tau^2\left(\kappa''-2\kappa^3\right)&=&0\,.\label{eq22}
\end{eqnarray}
Equation \eqref{eq11} only depends on the curvature $\kappa(s)$ while \eqref{eq22} depends on both $\kappa(s)$ and $\tau(s)$. Therefore, if there exists a nonconstant solution of \eqref{eq11} such that $\kappa''(s)\neq2\kappa^3(s)$, we can define a suitable torsion $\tau(s)$ as a solution of \eqref{eq22}. 

We recall that, by the Fundamental Theorem of Curves, an arc-length parametrized curve in $\mathbb{R}^3$ is completely determined, up to rigid motions, by its curvature and torsion. As a consequence, the nonconstant curvature $\kappa(s)$ which is a solution of \eqref{eq11} and the suitable election for the torsion $\tau(s)$, so that equation \eqref{eq22} is satisfied, completely determine the curve $\alpha(s)$ and, consequently,  the surface $S$. Moreover, $\alpha(s)$ shall be a triharmonic curve in $S$ with nonconstant (geodesic) curvature $\kappa_g(s)=\kappa(s)$.

In order to obtain solutions of \eqref{eq11} we follow \cite{K}. Assume that $\kappa(s)\neq 0$ and multiply \eqref{eq11} by $\kappa$. This makes the first two terms an exact derivative. At the same time, the last term is clearly a derivative and, hence, we can integrate once obtaining
\begin{equation}\label{help}
5\kappa^2\kappa''-2\kappa^5=c_1
\end{equation}
for some real constant $c_1$. Next, since we are seeking nonconstant solutions we assume that $\kappa'(s)\neq 0$ and multiply \eqref{help} by $2\kappa'\kappa^{-2}$. After this multiplication, we obtain an exact equation whose first integral is
\begin{equation}\label{fi}
5\left(\kappa'\right)^2=c_2-2c_1\frac{1}{\kappa}+\kappa^4
\end{equation}
for another real constant $c_2$.

Equation \eqref{fi} represents a biparametric family of first order differential equations in separable variables. Therefore, the family of solutions depends on three parameters. However, the last of these parameters can be omitted after translating the origin of the arc-length parameter $s$, if necessary.

In order to have an explicit solution, we consider the simplest possible case, $c_1=c_2=0$. In this case, equation \eqref{fi} can easily be solved obtaining that
\begin{equation}\label{curvature}
\kappa(s)=\frac{\sqrt{5}}{s}\,.
\end{equation}
It turns out that the function $\kappa(s)$ given in \eqref{curvature} satisfies
$$\kappa''(s)-2\kappa^3(s)=-\frac{8\sqrt{5}}{s^3}\neq 0$$
and, hence, as mentioned above, we can obtain a function $\tau(s)$ so that equation~\eqref{eq22} also holds. After some simplifications, we get
\begin{equation}\label{torsion}
\tau(s)=\frac{3\sqrt{7}}{2s}\,.
\end{equation}

\begin{rem} The curve $\alpha(s)$ in $\mathbb{R}^3$ uniquely  determined (up to rigid motions) by the curvature and torsion given by \eqref{curvature} and \eqref{torsion} respectively, satisfies the relation
$$\tau(s)=\frac{3}{2}\sqrt{\frac{7}{5}}\,\kappa(s)\,.$$
Curves satisfying a relation of the type $\tau(s)=\lambda\kappa(s)$, $\lambda\in\mathbb{R}$, are known in the literature as \emph{Lancret curves}, i.e. they are curves making a constant angle with a fixed direction, \cite{B}.
\end{rem}

We point out that for any choices of constants $c_1$ and $c_2$ in \eqref{fi}, the solution of that equation defines a nonconstant curvature, $\kappa(s)$. Moreover, equation~\eqref{eq22} always defines a torsion $\tau(s)$. In fact, using \eqref{help}, we can see that if $\kappa(s)$ is not constant, then $\kappa''\neq 2\kappa^3$ always holds. 

In conclusion, we can summarize the above discussion in the following proposition.

\begin{prop}\label{pro:existence}
Let $\alpha(s)$ be an arc-length parametrized curve of $\mathbb{R}^3$ with nonconstant curvature $\kappa(s)$ which is a solution of \eqref{fi} and nonconstant torsion $\tau(s)$ given by \eqref{eq22}. Let  $S$ be the ruled surface in $\r^3$ locally parametrized by $x(s,t)=\alpha(s)+t N(s)$. Then, $\alpha(s)$ is a triharmonic curve in $S$ with nonconstant geodesic curvature $\kappa_g(s)=\kappa(s)$.
\end{prop}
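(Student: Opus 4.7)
The plan is to verify that the curve $\alpha$ satisfies the system \eqref{eq1}--\eqref{eq2} when regarded as a curve in the ambient ruled surface $S$. This is essentially a synthesis of the computations scattered through the preceding discussion, organised into one clean argument.

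First I would establish the two geometric identifications used implicitly above. Since $x_s|_{t=0} = T$ and $x_t = N$ at all $t$, the tangent plane of $S$ at a point of $\alpha$ is spanned by $\{T,N\}$ and the unit surface normal there is $\pm B$. Consequently $\alpha''=\kappa N$ is tangential to $S$, so the intrinsic covariant derivative $\nabla_T T$ on $S$ agrees with the $\r^3$-acceleration of $\alpha$, and the geodesic curvature of $\alpha$ in $S$ equals the Frenet curvature $\kappa$ up to a sign; we fix it to be $+\kappa$ by orienting $S$ appropriately. A short computation with the first and second fundamental forms of the parametrization $x(s,t)$ at $t=0$ (or direct use of the Brioschi/Gauss formula for a ruled surface whose ruling is along $N$) yields $K_S(\alpha(s))=-\tau(s)^2$, as recorded in the text.

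With $\kappa_g=\kappa$ and $K_S=-\tau^2$ substituted in, \eqref{eq1}--\eqref{eq2} turn into exactly \eqref{eq11}--\eqref{eq22}. Equation \eqref{eq11} is implied by \eqref{fi}: multiplying \eqref{eq11} by $\kappa$ makes it an exact derivative whose first integral is \eqref{help}, and multiplying \eqref{help} by $2\kappa'/\kappa^2$ and integrating once more produces \eqref{fi}; reversing these steps shows that any solution of \eqref{fi} with $\kappa'\neq 0$ automatically solves \eqref{eq11}. For \eqref{eq22}, we define $\tau^2$ algebraically from that identity, which is legitimate as soon as $\kappa''-2\kappa^3\not\equiv 0$.

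The only point that requires a moment of attention is this nondegeneracy condition. If one had $\kappa''=2\kappa^3$ on an open set, then \eqref{help} would force $5\kappa^2(2\kappa^3)-2\kappa^5=8\kappa^5=c_1$, so $\kappa$ would be constant, contradicting the hypothesis. Hence $\kappa''-2\kappa^3$ is nowhere zero on the domain of interest, \eqref{eq22} determines $\tau$, and by the Fundamental Theorem of Curves the pair $(\kappa,\tau)$ determines $\alpha$ up to a rigid motion. The ruled surface $S$ and the curve $\alpha$ are therefore fully specified, and $\alpha$ is a triharmonic curve in $S$ with nonconstant geodesic curvature $\kappa_g=\kappa$, as claimed. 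The main (and essentially only) subtlety is the nondegeneracy step; every other link has already been forged in the discussion preceding the proposition.
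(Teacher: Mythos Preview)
Your proposal is correct and follows essentially the same approach as the paper. The paper presents the proposition as a summary of the preceding discussion rather than giving a separate proof, and your write-up recapitulates that discussion faithfully: the identifications $\kappa_g=\kappa$ and $K_S=-\tau^2$ along $\alpha$, the reduction of \eqref{eq1}--\eqref{eq2} to \eqref{eq11}--\eqref{eq22}, the reversal of the two integrations linking \eqref{eq11} to \eqref{fi}, and the nondegeneracy argument $\kappa''=2\kappa^3\Rightarrow 8\kappa^5=c_1$ via \eqref{help}, which is exactly the mechanism the paper alludes to when it says ``using \eqref{help}, we can see that if $\kappa(s)$ is not constant, then $\kappa''\neq 2\kappa^3$ always holds.''
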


\subsection{Proof of Theorem~\ref{existence}}

Now, using Proposition~\ref{pro:existence},  we are going to prove Theorem~\ref{existence}. Let $\alpha(s)$ be the unique (up to rigid motions) curve parametrized by arc-length in $\r^3$  whose  curvature and torsion are given by \eqref{curvature} and \eqref{torsion}, respectively.  Let $S$ be the surface in $\r^3$ locally parametrized by $x(s,t)=\alpha(s)+t N(s)$ and denote by $i:S\hookrightarrow S\times\mathbb{R}^{n-2}$ the canonical inclusion of $S$ in the product space $S\times\mathbb{R}^{n-2}$ (of dimension $n$) defined by $i(p)=\left(p,{\bf 0}\right)$ for any $p\in S$. Then, it is a straightforward computation to check that $i\left(\alpha(s)\right)$ is a triharmonic curve in $S\times\mathbb{R}^{n-2}$ with nonconstant curvature $\kappa(s)=\lVert\nabla_T T\rVert$ given in \eqref{curvature}. This finishes the proof of Theorem \ref{existence}.

\subsection{Triharmonic curves in 2-dimensional space forms}

We now consider  triharmonic curves with constant geodesic curvature immersed in a surface. Clearly, as mentioned above, if $\gamma(s)$ is a geodesic of $S$, that is, if its geodesic curvature vanishes identically then equations \eqref{eq1}-\eqref{eq2} are trivially satisfied. On the other hand, if $\gamma(s)$ has non-vanishing constant geodesic curvature then it is triharmonic if and only if along $\gamma$
$$K_S=\frac{1}{2}\kappa_g^2\,.$$
In particular, the Gaussian curvature, $K_S$, along a proper triharmonic curve $\gamma$ must be a positive constant. 

We then assume that the surfaces $S$ has positive constant Gaussian curvature, $K_S=\rho> 0$. These surfaces are locally isometric to the sphere $\mathbb{S}^2(\rho)$. In this case, as first proved by Maeta in \cite[Corollary 5.3]{M}, circles satisfying $\kappa_g^2=2\rho$ are proper triharmonic curves. It turns out that these are all the proper triharmonic curves in surfaces with constant Gaussian curvature as proved in the following theorem.

\begin{thm}\label{all} Let $S$ be a surface with constant Gaussian curvature $K_S$ and let $\gamma(s)$ be a triharmonic curve in $S$ with geodesic curvature $\kappa_g$. If $K_S\leq 0$, then $\gamma(s)$ is a geodesic. On the other hand, if $K_S>0$, $\gamma(s)$ is either a geodesic or a circle satisfying $\kappa_g^2=2K_S$. 
\end{thm}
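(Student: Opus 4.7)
The plan is first to handle the cases where $\kappa_g$ is constant and then to rule out non-constant $\kappa_g$ when $K_S$ is constant, by extracting an algebraic contradiction from \eqref{eq2} once the first integrals of \eqref{eq1} are fed in. If $\kappa_g\equiv 0$, then $\gamma$ is a geodesic by definition. If instead $\kappa_g$ equals a nonzero constant $k_0$, equation \eqref{eq1} is trivially satisfied and \eqref{eq2} reduces to $k_0^5-2K_S k_0^3=0$, which forces $K_S>0$ and $k_0^2=2K_S$; this is the circle of the statement.

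Assume, towards a contradiction, the existence of an open subinterval $I_0\subset I$ on which $\kappa_g'\neq 0$ and $\kappa_g\neq 0$. As derived in the discussion preceding Proposition \ref{pro:existence}, equation \eqref{eq1} admits two first integrals, which we rewrite as
$$
\kappa_g''=\frac{2\kappa_g^3}{5}+\frac{A}{\kappa_g^2},\qquad (\kappa_g')^2=\frac{\kappa_g^4}{5}-\frac{2A}{\kappa_g}+B,
$$
for suitable real constants $A,B$. Differentiating the first identity twice and eliminating $(\kappa_g')^2$ via the second one whenever it appears expresses $\kappa_g^{(4)}$ as a Laurent polynomial in $\kappa_g$ whose coefficients depend only on $A$ and $B$.

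Substituting the resulting expressions for $\kappa_g^{(4)}$, $\kappa_g''$ and $(\kappa_g')^2$ into \eqref{eq2}, and using that $K_S$ is constant, one obtains an identity of the form $P(\kappa_g(s))=0$, where $P$ is a Laurent polynomial in $\kappa_g$ whose coefficients depend only on $A$, $B$ and $K_S$. Since $\kappa_g'\neq 0$ on $I_0$, the function $\kappa_g$ is strictly monotone there and thus takes a full interval of values, forcing $P$ to vanish identically as a Laurent polynomial; every coefficient must be zero. Collecting the $\kappa_g^5$ contributions coming from $\kappa_g^{(4)}$, from $-15\kappa_g(\kappa_g')^2$, from $-10\kappa_g^2\kappa_g''$ and from the explicit $\kappa_g^5$ term of \eqref{eq2} gives
$$
\tfrac{24}{25}-3-4+1=-\tfrac{126}{25}\neq 0,
$$
independently of $A$, $B$ and $K_S$, which is the desired contradiction.

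Consequently $\kappa_g$ is constant on every interval where it is nonzero; continuity then forces $\kappa_g$ to be globally constant, and the conclusion follows from the first paragraph. The only laborious step of the plan is the explicit computation of $\kappa_g^{(4)}$ from the first integrals, but once that is done the argument reduces to reading off a single coefficient.
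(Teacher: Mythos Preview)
Your proof is correct and follows essentially the same strategy as the paper: both use the two first integrals of \eqref{eq1} to reduce \eqref{eq2} to a purely algebraic relation in $\kappa_g$, which forces $\kappa_g$ to be constant. The only cosmetic difference is that the paper eliminates $\kappa_g^{(4)}$ by differentiating \eqref{eq1} and combining with \eqref{eq2} to obtain a degree-ten polynomial, whereas you compute $\kappa_g^{(4)}$ directly from the first integrals, substitute into \eqref{eq2}, and simply observe that the resulting $\kappa_g^5$-coefficient $-126/25$ is nonzero---a slightly more economical route to the same contradiction.
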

\begin{proof} We consider first the case where the geodesic curvature is constant. As argued above, if $\gamma(s)$ is a triharmonic curve in $S$ with constant geodesic curvature $\kappa_g$, then either $\gamma(s)$ is a geodesic ($\kappa_g=0$) or $2K_S=\kappa_g^2$ holds. Clearly, the latter is only possible whenever $K_S>0$.\\
Next, we are going to prove that there are no triharmonic curves in $S$ with nonconstant geodesic curvature. Assume that $\gamma(s)$ is a triharmonic curve with nonconstant geodesic curvature, $\kappa_g(s)$. Then, following \cite{K} again, equation~\eqref{eq1} can be integrated, as we have done for  \eqref{help}, obtaining 
\begin{equation}\label{help1}
\kappa_g''=\frac{c_1}{5\kappa_g^2}+\frac{2}{5}\kappa_g^3
\end{equation}
and, as for \eqref{fi},
\begin{equation}\label{help2}
\left(\kappa_g'\right)^2=\frac{c_2}{5}-\frac{2c_1}{5\kappa_g}+\frac{1}{5}\kappa_g^4\,,
\end{equation}
where $c_1$ and $c_2$ are real constants. Since $\gamma(s)$ is triharmonic, equations \eqref{eq1} and \eqref{eq2} must be satisfied simultaneously. We now differentiate \eqref{eq1} and combine with \eqref{eq2} to eliminate the term $\kappa_g^{(4)}$. Then, with the aid of \eqref{help1} and \eqref{help2} we obtain, after a long but straightforward computation, the following polynomial equation of degree ten in $\kappa_g$
$$
51\kappa_g^{10}+75\kappa_g^9+40K_S\kappa_g^8+63c_2\kappa_g^6-84c_1\kappa_g^5-5c_1K_S\kappa_g^3-6c_1c_2\kappa_g+14c_1^2=0\,.$$
Thus $\kappa_g$ must be constant, which contradicts the assumption that $\gamma(s)$ is a triharmonic curve with nonconstant geodesic curvature. This concludes the proof.
\end{proof}

Surfaces with constant Gaussian curvature are locally isometric to 2-dimensional space forms $M^2(\rho)$, that is the Euclidean plane $\mathbb{R}^2$ if $\rho=0$; the round 2-sphere $\mathbb{S}^2(\rho)$ if $\rho>0$;  the hyperbolic plane $\mathbb{H}^2(\rho)$ if $\rho<0$. Then, interpreting Theorem \ref{all} to 2-dimensional space forms we obtain 

\begin{cor}\label{M2rho} Let $M^2(\rho)$ be a 2-dimensional Riemannian space form. If $\rho\leq 0$, the only triharmonic curves are geodesics. If $\rho>0$, triharmonic curves are either geodesics or circles satisfying $\kappa_g^2=2\rho$.
\end{cor}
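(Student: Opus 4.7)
The plan is to deduce Corollary~\ref{M2rho} as an essentially immediate consequence of Theorem~\ref{all}. The key observation is that the 2-dimensional Riemannian space forms $M^2(\rho)$ are precisely the complete, simply connected surfaces of constant Gaussian curvature equal to $\rho$; in particular, since the triharmonic equation \eqref{3-tension} is local in nature (it only involves covariant derivatives along $\gamma$ and the restriction of the curvature tensor to $\gamma$), the classification is entirely determined by the fact that $K_S \equiv \rho$ along any curve in $M^2(\rho)$.

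With this identification, I would simply specialize Theorem~\ref{all} with $K_S = \rho$. If $\rho \leq 0$, the theorem asserts that any triharmonic curve must be a geodesic, giving the first case. If $\rho > 0$, the theorem gives the dichotomy that $\gamma$ is either a geodesic or has constant geodesic curvature satisfying $\kappa_g^2 = 2K_S = 2\rho$; such constant-curvature curves in $M^2(\rho)$ are precisely the geodesic circles (as can be verified from the Frenet-type equation \eqref{FTS}, since $\nabla_T T = \kappa_g N_S$ with constant $\kappa_g$ characterizes circles in a 2-dimensional space form).

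There is no serious obstacle here: the heavy lifting — namely, showing that nonconstant geodesic curvature is impossible in the constant $K_S$ case via the polynomial identity in $\kappa_g$ derived from \eqref{eq1}--\eqref{eq2}, \eqref{help1}, and \eqref{help2} — has already been carried out in the proof of Theorem~\ref{all}. The only minor point to address is that $M^2(\rho)$ for $\rho \leq 0$ also admits the constant curvature case $\kappa_g^2 = 2\rho$ vacuously (since this forces $\rho \geq 0$), so the first statement is consistent, and the classification is complete.
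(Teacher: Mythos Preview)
Your proposal is correct and matches the paper's approach exactly: the paper presents Corollary~\ref{M2rho} as an immediate specialization of Theorem~\ref{all} to the case $K_S=\rho$, with no additional argument. Your remarks about the locality of the equation and the identification of constant-$\kappa_g$ curves as circles are fine but more than the paper itself supplies.
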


\section{Triharmonic helices in homogeneous 3-dimensional spaces}

In this section we are going to study proper triharmonic curves with constant curvature in a Riemannian manifold $M^3$ of dimension $3$.

Let us denote by $\gamma(s)$ an arc-length parametrized curve immersed in $M^3$ and let's put $\gamma'(s)=T(s)$. Assume that $\gamma(s)$ is non-geodesic, then $\gamma(s)$ is a Frenet curve of rank $2$ or $3$ and the standard Frenet frame along $\gamma(s)$ is denoted by $\{T(s),N(s),B(s)\}$. The Frenet equations are
\begin{eqnarray}\label{Frenetequations}
\begin{cases}
\nabla_T T(s)=\kappa(s)N(s)\\
\nabla_T N(s)=-\kappa(s)T(s)+\tau(s)B(s)\\
\nabla_T B(s)=-\tau(s)N(s)
\end{cases}
\end{eqnarray}
where $\kappa(s)$ is the curvature of $\gamma(s)$, while the function $\tau(s)$ is the \emph{torsion} of $\gamma(s)$. We shall say that a curve is a \emph{Frenet helix} if both  $\kappa(s)$ and $\tau(s)$ are constant.

By using equations \eqref{Frenetequations} in the equation $\tau_3(\gamma)=0$, \eqref{3-tension}, we can obtain a system of three differential equations characterizing triharmonic curves in $M^3$. Each of those differential equations corresponds to the tangent, normal and binormal component of the vector equation \eqref{3-tension}. In particular, the tangent component yields immediately the following result.

\begin{prop}\label{propeq13space} Let $\gamma(s)$ be an arc-length parametrized (proper) triharmonic curve immersed in a $3$-dimensional Riemannian manifold $M^3$, then
\begin{equation}\label{eq13space}
2\,\frac{d}{ds}\left(\kappa^2\kappa''\right)=\kappa\,\frac{d}{ds}\left(\kappa^2\left[\kappa^2+\tau^2\right]\right),
\end{equation}
where $\kappa=\kappa(s)$ and $\tau=\tau(s)$ are the curvature and torsion of $\gamma(s)$, respectively.
\end{prop}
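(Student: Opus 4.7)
My plan is to reduce the tangent component of the triharmonic equation \eqref{3-tension} to the scalar condition $\langle \nabla_T^5 T, T\rangle = 0$ and then unfold the latter via the Frenet equations \eqref{Frenetequations} until a differential identity in $\kappa$ and $\tau$ alone drops out.

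The first step is the observation that the two curvature terms of \eqref{3-tension} contribute nothing in the tangent direction. Indeed, by the antisymmetry $\langle R^M(X,Y)Z,W\rangle = -\langle R^M(X,Y)W,Z\rangle$, we have $\langle R^M(X,Y)T,T\rangle = 0$ for every pair $X,Y$. Applying this with $(X,Y)=(\nabla_T^3 T, T)$ and with $(X,Y)=(\nabla_T^2 T, \nabla_T T)$, the tangent projection of \eqref{3-tension} collapses to $\langle \nabla_T^5 T, T\rangle = 0$, and from this point onward the Riemann tensor disappears from the calculation entirely.

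Next, I would extract $\langle \nabla_T^5 T, T\rangle$ by iterating \eqref{Frenetequations}. The convenient device is the recursion: whenever $V = aT+bN+cB$ with smooth coefficients, \eqref{Frenetequations} yields
\begin{equation*}
\nabla_T V = (a'-b\kappa)\,T + (b'+a\kappa-c\tau)\,N + (c'+b\tau)\,B.
\end{equation*}
Starting from $T = 1\cdot T + 0\cdot N + 0\cdot B$ and applying this recipe five times, one reads off the $T$-coefficient of $\nabla_T^5 T$; careful bookkeeping (only the $T$-coefficient needs to be tracked, since the $N$ and $B$ coefficients feed back into it at the next step) yields
\begin{equation*}
\langle \nabla_T^5 T, T\rangle = -5\kappa\kappa''' - 10\kappa'\kappa'' + 10\kappa^3\kappa' + 5\kappa\kappa'\tau^2 + 5\kappa^2\tau\tau'.
\end{equation*}

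Finally, I would multiply the resulting equation $\langle \nabla_T^5 T, T\rangle = 0$ by $-2\kappa/5$ to obtain
\begin{equation*}
2\kappa^2\kappa''' + 4\kappa\kappa'\kappa'' = 4\kappa^4\kappa' + 2\kappa^2\kappa'\tau^2 + 2\kappa^3\tau\tau',
\end{equation*}
and recognize the two sides as $2\,(\kappa^2\kappa'')'$ and $\kappa\,(\kappa^2[\kappa^2+\tau^2])'$, respectively, which is exactly \eqref{eq13space}. The main obstacle is simply bookkeeping in the five-step Frenet recursion; the two conceptual moves—killing the curvature terms by antisymmetry of $R^M$ and repackaging the final polynomial as a product of total derivatives—are essentially automatic once the reduction to $\langle \nabla_T^5 T, T\rangle = 0$ is in place.
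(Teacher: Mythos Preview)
Your proposal is correct and follows exactly the route the paper indicates: the paper simply states that the tangent component of \eqref{3-tension}, computed via the Frenet equations \eqref{Frenetequations}, yields \eqref{eq13space}, without spelling out any of the intermediate steps. Your argument makes explicit the two points the paper leaves implicit---that the curvature terms drop out tangentially by the antisymmetry of $R^M$, and that the resulting scalar $\langle\nabla_T^5 T,T\rangle=0$ reorganises into the stated total-derivative identity after multiplication by $-2\kappa/5$---so it is the same proof, just fully unpacked.
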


As a consequence of Proposition \ref{propeq13space}, we conclude with the following characterization  of proper triharmonic curves in $M^3$ with constant curvature.

\begin{cor}\label{helices} Let $\gamma(s)$ be a proper triharmonic curve immersed in a $3$-dimensional Riemannian manifold $M^3$ with constant curvature $\kappa(s)=\kappa_o$. Then the curve $\gamma(s)$ is a Frenet helix. Moreover, the curvature $\kappa_o\neq 0$ and the torsion $\tau_o$ satisfy the system 
\begin{eqnarray}
\left(\kappa_o^2+\tau_o^2\right)^2-\left(2\kappa_o^2+\tau_o^2\right)\langle R^M\left(N,T\right)T,N\rangle-\kappa_o\tau_o\langle R^M\left(B,N\right)T,N\rangle&=&0\,,\label{eq1helix}\\
{\left(2\kappa_o^2+\tau_o^2\right)\langle R^M\left(N,T\right)T,B\rangle+\kappa_o}\tau_o\langle R^M\left(B,N\right)T, B\rangle&=&0\,.\label{eq2helix}
\end{eqnarray}
\end{cor}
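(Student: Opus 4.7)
The plan is to combine Proposition~\ref{propeq13space}, which already captures the tangent component of the triharmonic equation, with a direct computation of the normal and binormal components of \eqref{3-tension} carried out through iterated application of the Frenet equations \eqref{Frenetequations}. Throughout, the hypothesis that $\gamma$ is proper (hence not a geodesic) is what lets us cancel a global factor of $\kappa_o$ at the end.

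First I would feed $\kappa(s) \equiv \kappa_o$ into the identity \eqref{eq13space}. Its left-hand side vanishes, while the right-hand side reduces to $\kappa_o^{3}\,(\tau^{2})'$. If $\kappa_o = 0$ then $\gamma$ is a geodesic, hence harmonic, contradicting properness; therefore $\kappa_o\neq 0$ and $(\tau^{2})' \equiv 0$. Since $\tau$ is smooth and can take only the values $\pm\sqrt{\tau^{2}}$, it must itself be constant, say $\tau_o$. This is the Frenet-helix claim.

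Next I would use \eqref{Frenetequations} with constant $\kappa_o,\tau_o$ to compute, step by step,
\begin{align*}
\nabla_T T &= \kappa_o\,N, \\
\nabla_T^{2} T &= -\kappa_o^{2}\,T + \kappa_o\tau_o\,B, \\
\nabla_T^{3} T &= -\kappa_o(\kappa_o^{2}+\tau_o^{2})\,N, \\
\nabla_T^{5} T &= \kappa_o(\kappa_o^{2}+\tau_o^{2})^{2}\,N.
\end{align*}
Substituting into \eqref{3-tension} and using the multilinearity of $R^M$ together with the antisymmetry $R^M(X,Y)=-R^M(Y,X)$, the two curvature terms collapse onto the pair $R^M(N,T)T$ and $R^M(B,N)T$; specifically, the combined contribution is
$-\kappa_o(2\kappa_o^{2}+\tau_o^{2})\,R^M(N,T)T \;-\; \kappa_o^{2}\tau_o\,R^M(B,N)T$.
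The tangent component of the resulting vector equation is just a restatement of Proposition~\ref{propeq13space} and so adds nothing new, while pairing with $N$ and with $B$, and dividing by the nonzero factor $\kappa_o$, produces exactly \eqref{eq1helix} and \eqref{eq2helix}.

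The computation is essentially bookkeeping, so there is no conceptual obstacle; the only care needed is in tracking signs. The coefficient $2\kappa_o^{2}+\tau_o^{2}$ common to both displayed equations is the one place where this matters: it arises by summing the $R^M(N,T)T$ contribution $-\kappa_o(\kappa_o^{2}+\tau_o^{2})$ coming from $R^M(\nabla_T^{3}T,T)T$ with the contribution $-\kappa_o^{3}$ coming from $-R^M(\nabla_T^{2}T,\nabla_T T)T$ after the swap $R^M(T,N)T=-R^M(N,T)T$.
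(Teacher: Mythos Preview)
Your proposal is correct and follows exactly the same approach as the paper's own proof: use Proposition~\ref{propeq13space} with $\kappa(s)\equiv\kappa_o$ to force $\tau$ constant (invoking properness to rule out $\kappa_o=0$), then expand \eqref{3-tension} via \eqref{Frenetequations} and read off the $N$- and $B$-components. The paper merely summarizes the second step as ``a long but straightforward computation''; your explicit Frenet ladder and your tracking of the $R^M(N,T)T$ and $R^M(B,N)T$ coefficients are precisely that computation, with the correct signs.
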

\begin{proof} Since $\gamma(s)$ is a proper triharmonic curve, its curvature and torsion satisfy  \eqref{eq13space}, which implies, since the curvature $\kappa(s)=\kappa_o$ is a nonzero constant, that the torsion is necessarily constant, proving that the curve is a Frenet helix. Finally, assuming that  the curvature $\kappa_o\neq 0$ and the torsion $\tau_o$ are constant, the normal component and the binormal component of \eqref{3-tension} become, after  a long but straightforward computation, \eqref{eq1helix} and \eqref{eq2helix}, respectively.
\end{proof}

\subsection{Triharmonic helices in homogeneous 3-dimensional manifolds}

From now on, we are going to restrict ourselves to the analysis of proper triharmonic helices in homogeneous 3-dimensional manifolds.

A Riemannian manifold $M^n$ is said to be \emph{homogeneous} if for every two points $p$ and $q$ in $M^n$, there exists an isometry of $M^n$ mapping $p$ into $q$. For homogeneous 3-dimensional manifolds ($n=3$) there are three possibilities for the degree of rigidity, since they may have the isometry group of dimension $6$, $4$ or $3$. The maximum rigidity, $6$, corresponds to 3-dimensional space forms $M^3(\rho)$.

Applying Corollary \ref{helices}, for proper triharmonic curves in a 3-dimensional space form $M^3(\rho)$, allows us to state the following proposition.

\begin{prop}\label{pro:constantktau} Let $\gamma(s)$ be a proper triharmonic curve immersed in a 3-dimensional space form $M^3(\rho)$, then $\gamma(s)$ has constant curvature if and only if it has constant torsion.
\end{prop}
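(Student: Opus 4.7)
The forward implication is immediate from Corollary \ref{helices}: if $\kappa\equiv\kappa_o$ is a (nonzero) constant, that corollary already states that $\gamma$ is a Frenet helix, so in particular $\tau$ is constant. For the converse I would assume $\tau\equiv\tau_o$ is constant and split into two cases.

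\emph{Case 1: $\tau_o=0$.} Expanding \eqref{3-tension} in the Frenet frame with $\tau\equiv 0$, the binormal component of the equation vanishes identically while the tangent and normal components reduce exactly to the two-dimensional system \eqref{eq1}--\eqref{eq2} with $K_S=\rho$. Corollary \ref{M2rho} then forces $\kappa$ to be constant (either $\kappa=0$, or $\kappa^2=2\rho$ when $\rho>0$).

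\emph{Case 2: $\tau_o\neq 0$.} The plan is to pair the tangent equation \eqref{eq13space} with a second, independent scalar equation coming from the binormal projection of \eqref{3-tension}, and then eliminate derivatives of $\kappa$ until only a polynomial identity in $\kappa$ remains. Concretely, I would iterate the Frenet relations to expand $\nabla_T^k T$ for $k\le 5$ in the basis $\{T,N,B\}$, substitute in \eqref{3-tension}, and simplify the curvature contribution via the space-form identity
\[
R^M(X,Y)Z=\rho\bigl(\langle Y,Z\rangle X-\langle X,Z\rangle Y\bigr),
\]
which sends the $R^M$ term into $\mathrm{span}(N,B)$ only. The tangent projection recovers \eqref{eq13space} and, with $\tau=\tau_o$ constant, reads
\[
\kappa\kappa'''+2\kappa'\kappa''-\kappa\kappa'\bigl(2\kappa^2+\tau_o^2\bigr)=0.
\]
The binormal projection, after dividing by the nonzero factor $\tau_o$, yields the independent third-order ODE
\[
4\kappa'''-\bigl(9\kappa^2+4\tau_o^2-2\rho\bigr)\kappa'=0.
\]

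Eliminating $\kappa'''$ between the last two displayed equations produces the factored identity $\kappa'\bigl(8\kappa''+\kappa^3-2\rho\kappa\bigr)=0$. Assuming towards a contradiction that $\kappa$ is non-constant, on some open subinterval $\kappa'\neq 0$, so that $8\kappa''=2\rho\kappa-\kappa^3$; differentiating this once and feeding the resulting $\kappa'''$ back into the binormal relation collapses the system to the purely algebraic identity $21\kappa^2+8\tau_o^2-6\rho=0$, which forces $\kappa$ to be locally constant, a contradiction. The main obstacle is the careful bookkeeping of the Frenet expansion of $\nabla_T^5 T$, whose binormal coefficient collects contributions from every lower-order Frenet derivative and whose sign and numerical factors must be tracked with care; once the two scalar ODEs above are in hand the polynomial reduction is essentially immediate.
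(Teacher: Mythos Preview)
Your argument is correct and parallels the paper's closely: you extract the same tangent and binormal scalar ODEs (the paper's \eqref{help11} and \eqref{help22}) and reduce them to a polynomial identity in $\kappa$. The difference lies only in how you perform that reduction in Case~2. The paper integrates \eqref{help11} twice to obtain a first integral of the form $5(\kappa')^2=c_2-2c_1/\kappa+\kappa^4+\tfrac{5}{3}\tau_o^2\kappa^2$, then multiplies \eqref{help22} by $\kappa$, eliminates $\kappa'''$ against \eqref{help11}, and integrates once more to obtain $4(\kappa')^2=c_o+\kappa^2(\rho-\tfrac14\kappa^2)$; combining the two first integrals yields a degree-five polynomial in $\kappa$ with coefficients depending on the integration constants $c_0,c_1,c_2$. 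Your route avoids all integrations: eliminating $\kappa'''$ directly gives $\kappa'(8\kappa''+\kappa^3-2\rho\kappa)=0$, and one differentiation followed by back-substitution into the binormal relation collapses everything to the single algebraic constraint $21\kappa^2+8\tau_o^2-6\rho=0$, free of auxiliary constants. This is genuinely shorter and more transparent, at the cost of leaning slightly harder on the assumption $\kappa'\neq 0$ on an open subinterval (which you handle correctly). For Case~1, the paper instead argues geometrically that a curve with $\tau\equiv 0$ in $M^3(\rho)$ lies in a totally geodesic $M^2(\rho)$ and then invokes Corollary~\ref{M2rho}; your direct reduction of the Frenet system to \eqref{eq1}--\eqref{eq2} with $K_S=\rho$ is equivalent, though strictly speaking you are invoking the ODE content of Theorem~\ref{all} rather than Corollary~\ref{M2rho} itself.
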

\begin{proof} We just need to prove that a triharmonic curve in $M^3(\rho)$ with constant torsion has also constant curvature. Let $\gamma(s)$ be a proper triharmonic curve with constant torsion $\tau(s)=\tau_o$. If $\tau_o=0$, we can assume that the curve $\gamma(s)$ lies on a totally geodesic surface of $M^3(\rho)$, that is, on $M^2(\rho)$. Then, from Corollary \ref{M2rho} we end the proof.\\
Therefore, we assume that $\tau(s)=\tau_o\neq 0$. By contradiction, we suppose that the curvature of $\gamma(s)$, $\kappa(s)$, is not constant. Using the Frenet equations \eqref{Frenetequations} the tangent and binormal components of \eqref{3-tension} become
\begin{eqnarray}
\kappa\kappa'''-\left(2\kappa^2+\tau_o^2\right)\kappa\kappa'+2\kappa'\kappa''&=&0\,,\label{help11}\\
4\kappa'''-4\tau_o^2\kappa'-9\kappa^2\kappa'+2\rho\kappa'&=&0\,.\label{help22}
\end{eqnarray}
Observe that, following the same method of \S 3 (see also \cite{K}), equation \eqref{help11} can be integrated twice obtaining (compare with \eqref{fi} for the case $\tau_o=0$)
\begin{equation}\label{*}
5\left(\kappa'\right)^2=c_2-2c_1\frac{1}{\kappa}+\kappa^4+\frac{5}{3}\tau_o^2\kappa^2\,,
\end{equation}
for some real constants $c_1$ and $c_2$.\\
On the other hand, if we multiply equation \eqref{help22} by $\kappa$ and combine it with \eqref{help11} to eliminate the term $\kappa'''$, we reach to an exact differential equation whose first integral is
\begin{equation}\label{**}
4\left(\kappa'\right)^2=c_o+\kappa^2\left(\rho-\frac{1}{4}\kappa^2\right),
\end{equation}
for a real constant $c_o$.\\
Finally, combining  \eqref{*} and \eqref{**}, we get the following  polynomial equation of degree five in $\kappa$,
$$21\kappa^5+\left(\frac{80}{3}\tau_o^2-20\rho\right)\kappa^3+\left(16c_2-20c_0\right)\kappa-32c_1=0\,,$$
which contradicts the assumption that $\kappa(s)$ is not constant.
\end{proof}

Now, if $\gamma(s)$ is a Frenet helix in $M^3(\rho)$, since $R^M\left(B,N\right)T{=R^M\left(T,B\right)N}=0$, equations \eqref{eq1helix} and \eqref{eq2helix} simplify to
$$\left(\kappa_o^2+\tau_o^2\right)^2=\left(2\kappa_o^2+\tau_o^2\right)\rho\,.$$
Hence, using the latter, we conclude with the following classification of triharmonic Frenet helices in 3-dimensional space forms.

\begin{thm}\label{4.4} Let $M^3(\rho)$ be a 3-dimensional space form and consider a Frenet helix $\gamma(s)$ immersed in $M^3(\rho)$. If $\gamma(s)$ is a triharmonic curve, then either it is a geodesic or $M^3(\rho)=\mathbb{S}^3(\rho)$ and the constant curvature of $\gamma(s)$ is given by
$$\kappa^2(s)=\kappa_o^2={\left(\rho-\tau_o^2\right)\pm\sqrt{\rho\left(\rho-\tau_o^2\right)}}\,,$$
where $\tau_o$ is the constant torsion of $\gamma(s)$. In particular, if $\tau_o=0$, we have that $\gamma(s)$ is a circle in $\mathbb{S}^2(\rho)$ satisfying $\kappa_o^2=2\rho$.
\end{thm}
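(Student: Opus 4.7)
The plan is to reduce Theorem~\ref{4.4} to an algebraic equation in $\kappa_o^2$ and $\tau_o^2$ by exploiting the constant sectional curvature of $M^3(\rho)$, then perform a case analysis on the sign of $\rho$. Since $\gamma(s)$ is already assumed to be a Frenet helix that is triharmonic, Corollary~\ref{helices} tells us that $\kappa_o$ and $\tau_o$ must satisfy the system \eqref{eq1helix}--\eqref{eq2helix}. The first task is therefore to evaluate the four curvature expressions appearing there when $M^3 = M^3(\rho)$. Using the identity $R^M(X,Y)Z = \rho(\langle Y,Z\rangle X-\langle X,Z\rangle Y)$ and the orthonormality of the Frenet frame $\{T,N,B\}$, one finds $R^M(N,T)T = \rho N$ and $R^M(B,N)T = 0$, so that
\begin{equation*}
\langle R^M(N,T)T,N\rangle = \rho,\qquad \langle R^M(N,T)T,B\rangle = 0,\qquad \langle R^M(B,N)T,N\rangle = \langle R^M(B,N)T,B\rangle = 0.
\end{equation*}
Consequently \eqref{eq2helix} is automatic, while \eqref{eq1helix} collapses to the single scalar identity $(\kappa_o^2+\tau_o^2)^2 = (2\kappa_o^2+\tau_o^2)\rho$, as already indicated in the paragraph preceding the theorem.

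The second step is to treat this identity as a quadratic equation in the variable $x := \kappa_o^2$. Expanding yields $x^2 + 2(\tau_o^2-\rho)x + (\tau_o^4 - \tau_o^2\rho) = 0$, whose discriminant simplifies to $4\rho(\rho-\tau_o^2)$. Applying the quadratic formula and simplifying gives exactly $\kappa_o^2 = (\rho-\tau_o^2)\pm\sqrt{\rho(\rho-\tau_o^2)}$, which is the formula stated in the theorem. The specialization to $\tau_o = 0$ yields $\kappa_o^2 \in \{0,\,2\rho\}$, recovering the $\kappa_o^2 = 2\rho$ circle predicted by Corollary~\ref{M2rho}, since in that case $\gamma(s)$ lies on a totally geodesic $M^2(\rho)$.

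The third and most delicate step is to check that no proper (non-geodesic) solutions exist when $\rho \leq 0$, i.e., that $\kappa_o^2 > 0$ forces $\rho > 0$. If $\rho = 0$ the formula collapses to $\kappa_o^2 = -\tau_o^2$, forcing $\kappa_o = \tau_o = 0$. If $\rho < 0$, then $\rho - \tau_o^2 < 0$, and writing the radicand as $\rho(\rho-\tau_o^2) = (-\rho)(\tau_o^2-\rho) \geq 0$, the candidate value $(\rho-\tau_o^2)\pm\sqrt{\rho(\rho-\tau_o^2)}$ can be shown to be nonpositive: indeed, the inequality $\sqrt{(-\rho)(\tau_o^2-\rho)} \geq \tau_o^2-\rho$ would require $-\rho \geq \tau_o^2-\rho$, i.e., $\tau_o^2 \leq 0$, and even then one recovers only $\kappa_o = 0$. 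Hence any proper triharmonic helix forces $\rho > 0$ with $\rho \geq \tau_o^2$, so $M^3(\rho) = \mathbb{S}^3(\rho)$, completing the proof. I expect this final sign analysis to be the only part requiring real care; the rest is a direct substitution into the identities already established by Corollary~\ref{helices}.
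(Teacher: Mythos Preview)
Your proof is correct and follows the same approach as the paper: reduce via Corollary~\ref{helices} and the space-form curvature identity $R^M(X,Y)Z=\rho(\langle Y,Z\rangle X-\langle X,Z\rangle Y)$ to the single scalar equation $(\kappa_o^2+\tau_o^2)^2=(2\kappa_o^2+\tau_o^2)\rho$, then solve it as a quadratic in $\kappa_o^2$. In fact you supply more detail than the paper does, since the paper records only the scalar reduction and leaves the quadratic formula and the sign analysis for $\rho\le 0$ implicit, whereas you carry both out explicitly.
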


We focus now on homogeneous 3-dimensional spaces with the isometry group of dimension $4$. These spaces include, amongst its simply connected members, the product spaces $\mathbb{S}^2(\rho)\times\mathbb{R}$ and $\mathbb{H}^2(\rho)\times\mathbb{R}$; the Berger spheres; the Heisenberg group; and the universal covering of the special linear group $Sl(2,\mathbb{R})$. Cartan in \cite{Ca} showed that all homogeneous 3-manifolds with the isometry group of dimension $4$ can be described by a Bianchi-Cartan-Vranceanu (BCV) space $M(a,b)$, where $4\,a\neq b^2$. We recall that  BCV spaces (see \cite{Bianchi,Ca,Vr}) are described by  the following two-parameter family of Riemannian metrics
\begin{equation}\label{1.1}
g_{a,b} =\frac{dx^{2} + dy^{2}}{[1 + a(x^{2} + y^{2})]^{2}} +  \left(dz + \frac{b}{2} \frac{ydx - xdy}{[1 + a(x^{2} + y^{2})]}\right)^{2},\quad a,b \in {\mathbb{R}}
\end{equation}
defined on $M^3=\{(x,y,z)\in\mathbb{R}^3\colon \lambda_a=1+a\left(x^2+y^2\right)>0\}$. We are going to denote these BCV spaces by $M(a,b)$, while the metrics $g_{a,b}$, simply, by $\langle,\rangle$.

Now, if we consider the orthonormal basis of vector fields given by $\{E_1,E_2,E_3\}$ where
\begin{equation}
E_1=\lambda_a\, \frac{\partial}{\partial x}-\frac{b\, y}{2}\frac{\partial}{\partial z}\, , \quad E_2=\lambda_a\,\frac{\partial}{\partial y}+\frac{b\, x}{2}\frac{\partial}{\partial z}\, ,\quad E_3=\frac{\partial}{\partial z}\, ,\label{orthonormalframe}
\end{equation}
we can write the expressions for the Levi-Civita connection as
\begin{equation}\label{eq:levi-civita-BCV}
\begin{array}{lll}
\nabla_{E_1} E_1=2\,a\, y\, E_2\, , \quad & \nabla_{E_1} E_2=-2\, a\, y\, E_1+\frac{b}{2}E_3\, ,\quad & \nabla_{E_1}E_3=-\frac{b}{2}E_2 \, ,  \\
\nabla_{E_2} E_1=-2\,a\,y\, E_1+\frac{b}{2}E_3\, ,\quad & \nabla_{E_2}E_2=2\,a\, x\, E_1\, ,\quad & \nabla_{E_2}E_3=\frac{b}{2}E_1\, ,  \\
\nabla_{E_3}E_1=-\frac{b}{2}E_2\, , \quad & \nabla_{E_3}E_2= \frac{b}{2}E_1\, ,\quad & \nabla_{E_3}E_3 =0\, . 
\end{array}
\end{equation}
Moreover, the nonzero components of the curvature tensor can be computed, obtaining
\begin{equation}
R_{1212}=4\,a-\frac{3}{4} b^2\, , \quad\quad R_{1313}=R_{2323}= \frac{b^2}{4}\, . \label{R}
\end{equation}

Observe that, from the above expressions of curvature tensor, if $4\,a=b^2$ then $M(a,b)$ represents a 3-dimensional space form. Therefore, from now on, we are going to assume that $4\,a\neq b^2$. In these cases, as mentioned before, the family of metrics \eqref{1.1} includes all three-dimensional homogeneous metrics whose isometry group has dimension $4$. The classification of these spaces is as follows 
\begin{itemize}
\item If $a=0$ and $b\neq 0$, we have that $M(a,b)\cong \mathbb{H}_3$, the \emph{Heisenberg group}.
\item If $a>0$ and $b=0$, $M(a,b)\cong\left(\mathbb{S}^2(4\,a) -\{\infty\}\right) \times \mathbb{R}$.
\item If $a<0$ and $b=0$, $M(a,b)\cong \mathbb{H}^2(4\,a)\times \mathbb{R}$.
\item If $a>0$, $b\neq 0$ and $4\,a\neq b^2$, then $M(a,b)\cong SU(2)- \{\infty\}$.
\item And, finally, if $a<0$ and $b\neq 0$, we have that $M(a,b)\cong \widetilde{Sl}(2,\mathbb{R})$.
\end{itemize}

The Lie algebra of the infinitesimal isometries of $M(a,b)$ with $4\,a\neq b^2$ admits the following basis of Killing vector fields
\begin{equation}\nonumber
\begin{array}{lll}
X_1&=&\left(1-\frac{2\,a\, y^2}{\lambda_a}\right) E_1+\frac{2axy}{\lambda_a} E_2+ \frac{b y}{\lambda_a} E_3\, , \\
X_2&=&\frac{2axy}{\lambda_a} E_1+\left(1-\frac{2ax^2}{\lambda_a}\right) E_2-\frac{bx}{\lambda_a} E_3\, ,\\
X_3&=&-\frac{y}{\lambda_a} E_1+\frac{x}{\lambda_a}E_2-\frac{b\left(x^2+y^2\right)}{2\lambda_a} E_3\, ,\\
X_4&=&E_3\, , 
\end{array}
\end{equation}
where $\{E_i\}$, $i=1,2,3$, is the orthonormal basis introduced in \eqref{orthonormalframe}. 

Then, a surface which stays invariant under the action of any Killing vector field, $\xi$, is called an \emph{invariant surface}. In particular, invariant surfaces under the action of the Killing vector field $X_4$ are usually called \emph{Hopf cylinders}. These cylinders can be parametrized as $\mathbf{x}(s,t)=\psi_t(\widetilde{\alpha}(s))$, where $\widetilde{\alpha}(s)$ denotes an arc-length parametrized curve orthogonal to $X_4$ in $M(a,b)$ while $\{\psi_t\,;\, t\in\mathbb{R}\}$ is the one-parameter group of isometries associated to $X_4$. 

Let $\gamma(s)$ be an arc-length parametrized triharmonic curve with constant curvature $\kappa(s)=\kappa_o\neq 0$, immersed in a BCV space $M(a,b)$ with $4a\neq b^2$. Then, by Corollary \ref{helices}, we have that the torsion of $\gamma(s)$ is also constant, that is $\gamma(s)$ is a Frenet helix. A partial converse of Corollary \ref{helices} holds in these spaces.

\begin{prop}\label{45} Let $\gamma(s)$ be an arc-length parametrized triharmonic curve with vanishing torsion immersed in a BCV space $M(a,b)$ with $4a\neq b^2$. Then, the curvature of $\gamma(s)$ is constant. 
\end{prop}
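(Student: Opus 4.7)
\emph{Plan.} My plan is to decompose the equation $\tau_3(\gamma)=0$ along the Frenet frame $\{T,N,B\}$, exploit the fact that $\tau\equiv 0$ makes $B$ parallel along $\gamma$, and then reduce the problem to the $2$-dimensional classification already proved in Theorem~\ref{all}. With $\tau\equiv 0$, iterating the Frenet equations \eqref{Frenetequations} (exactly as in the derivation of \eqref{eq1}-\eqref{eq2}) and projecting \eqref{3-tension} onto $T$, $N$, and $B$ produces the three scalar conditions
\begin{equation*}
\kappa\kappa'''+2\kappa'\kappa''-2\kappa^3\kappa'=0,
\end{equation*}
\begin{equation*}
\kappa^{(4)}-15\kappa(\kappa')^2-10\kappa^2\kappa''+\kappa^5+K_1\,(\kappa''-2\kappa^3)=0,
\end{equation*}
\begin{equation*}
(\kappa''-2\kappa^3)\,K_2=0,
\end{equation*}
with $K_1=\langle R^M(N,T)T,N\rangle$ and $K_2=\langle R^M(N,T)T,B\rangle$. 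The tangent equation coincides with \eqref{eq1} and the normal one with \eqref{eq2} after replacing $K_S$ by $K_1$, so the whole argument reduces to forcing $K_1$ to be constant along $\gamma$.

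\emph{Using the binormal equation.} Suppose for contradiction that $\kappa$ is not constant. Multiplying the first equation by $\kappa$ and integrating once (exactly as in the derivation of \eqref{help}) gives $5\kappa^2\kappa''-2\kappa^5=c_1$, so that
\begin{equation*}
\kappa''-2\kappa^3=\frac{c_1-8\kappa^5}{5\kappa^2}
\end{equation*}
cannot vanish identically (it would force $\kappa$ to be constant). The binormal equation therefore yields $K_2\equiv 0$ along $\gamma$.

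\emph{The key identity and conclusion.} Set $\nu=\langle T,E_3\rangle$, $\mu=\langle N,E_3\rangle$, $\beta=\langle B,E_3\rangle$ in the frame \eqref{orthonormalframe}, so that $\nu^2+\mu^2+\beta^2=1$. Using the curvature components \eqref{R} a direct computation gives
\begin{equation*}
K_1=\tfrac{b^2}{4}+(4a-b^2)\,\beta^2,\qquad K_2=-(4a-b^2)\,\beta\mu.
\end{equation*}
On the other hand, $\nabla_T B=0$ and \eqref{eq:levi-civita-BCV} yield $\nabla_T E_3=(b/2)(T_2 E_1-T_1 E_2)$, and a short computation with the cross product $T\times B=-N$ gives $\beta'=(b/2)\mu$. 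Consequently,
\begin{equation*}
K_1'=2(4a-b^2)\,\beta\beta'=b\,(4a-b^2)\,\beta\mu=-b\,K_2.
\end{equation*}
Since $K_2\equiv 0$, we obtain $K_1'\equiv 0$, so $K_1=K_0$ is a constant along $\gamma$. With $K_1=K_0$, the tangent and normal equations above become precisely the system \eqref{eq1}-\eqref{eq2} for a surface of constant Gaussian curvature $K_S=K_0$, and Theorem~\ref{all} forces $\kappa$ to be constant (either $\kappa\equiv 0$ or $\kappa^2=2K_0$), contradicting our assumption.

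\emph{Main obstacle.} The substantive step is the explicit computation of $K_1$ and $K_2$ in the BCV frame together with the identity $K_1'=-b\,K_2$ coming from the parallelism of $B$; once this identity is available, the binormal equation eliminates exactly the variation of $K_1$ and the problem collapses onto the surface case already settled in Theorem~\ref{all}.
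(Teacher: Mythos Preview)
Your proof is correct and follows the same overall strategy as the paper: show that $K_1=\langle R^M(N,T)T,N\rangle=\tfrac{b^2}{4}+(4a-b^2)B_3^2$ is constant along $\gamma$, and then invoke the ODE argument of Theorem~\ref{all} to force $\kappa$ constant.

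The difference lies in how the constancy of $K_1$ is obtained. The paper argues that $\nabla_T B=0$ implies $B_3=\langle B,E_3\rangle$ is constant, and hence so is $K_1$. You instead extract the binormal component $(\kappa''-2\kappa^3)K_2=0$ of \eqref{3-tension}, deduce $K_2\equiv 0$ under the nonconstancy hypothesis, and then use the differential identity $K_1'=-b\,K_2$ (coming from $\beta'=\tfrac{b}{2}\mu$) to conclude. Your route is in fact the more careful one: since $\nabla_T E_3=\tfrac{b}{2}\,T\times E_3$, one has $B_3'=\langle B,\nabla_T E_3\rangle=\tfrac{b}{2}\,N_3$, which does not vanish a priori when $b\neq 0$; the paper's direct inference ``$\nabla_T B=0\Rightarrow B_3$ constant'' leaves this point implicit. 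Your use of the binormal equation supplies precisely the missing piece (it gives $(4a-b^2)B_3N_3=0$, hence $K_1'=0$), so the two arguments ultimately coincide but yours closes the gap explicitly.
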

\begin{proof} Let $\gamma(s)$ denotes a triharmonic curve with $\tau(s)=0$. We first note that, since the torsion vanishes, the binormal $B$ is constant along $\gamma$. In fact,
$$\nabla_T B(s)=-\tau(s)N(s)=0$$
holds from \eqref{Frenetequations}. Hence, in particular, $B_3=\langle B,E_3\rangle$ is constant along $\gamma$ and so is
$$\langle R^M\left(N,T\right)T,N\rangle=\frac{b^2}{4}+\left(4a-b^2\right)B_3^2\,.$$
Finally, using that $\langle R^M\left(N,T\right)T,N\rangle$ is constant and $\tau(s)=0$, a similar argument as in Theorem \ref{all} concludes the proof.
\end{proof}
Consider a proper triharmonic Frenet helix $\gamma(s)$ with (constant) curvature $\kappa_o$ and torsion $\tau_o$. Then, equations \eqref{eq1helix} and \eqref{eq2helix} must hold and, using \eqref{R}, they become
\begin{eqnarray}
\left(\kappa_o^2+\tau_o^2\right)^2-\left(2\kappa_o^2+\tau_o^2\right)\left(\frac{b^2}{4}+\left[4a-b^2\right]B_3^2\right)-\kappa_o\tau_o\left(4a-b^2\right)T_3B_3&=&0\,, \label{eq1cst}\\
{\left(2\kappa_o^2+\tau_o^2\right)N_3B_3+\kappa_o}\tau_oT_3N_3&=&0\,,\label{eq2cst}
\end{eqnarray}
where $T_3=\langle T,E_3\rangle$, $N_3=\langle N, E_3\rangle$ and $B_3=\langle B, E_3\rangle$.

{If the constant torsion $\tau_o$ is identically zero, then \eqref{eq2cst} becomes $N_3B_3=0$}, while \eqref{eq1cst} simplifies to
\begin{equation}\label{ko}
\kappa_o^2=2\left(\frac{b^2}{4}+\left[4a-b^2\right]B_3^2\right)
\end{equation}
if $\kappa_o\neq 0$. Note that in this case, since $\tau_o=0$, $\nabla_T B(s)=0$ holds and, hence, $B_3$ is a constant along the Frenet helix. The existence of proper triharmonic helices with vanishing torsion depends on the value of the constant $B_3$. In fact, with the aid of  Proposition~\ref{45}, we have immediately the following result.

\begin{thm}\label{46} Let $\gamma(s)$ be an arc-length parametrized triharmonic curve with vanishing torsion immersed in a BCV space $M(a,b)$ with $4a\neq b^2$. Then, either $\gamma(s)$ is a geodesic or $\gamma(s)$ is a Frenet helix where its constant curvature $\kappa_o$ is given by \eqref{ko}. Moreover, $B_3=\langle B, E_3\rangle$ is a constant satisfying 
$$B_3^2<\frac{b^2}{4\left(b^2-4a\right)}$$
if $b^2>4a$; or, $B_3\neq 0$ if $b^2<4a$. In particular, there are no proper triharmonic curves with vanishing torsion in the product space $\mathbb{H}^2(4a)\times\mathbb{R}$.
\end{thm}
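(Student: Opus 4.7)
My plan is to combine Proposition~\ref{45} with Corollary~\ref{helices} and then analyze the residual algebraic constraints in the BCV setting. Since $\tau\equiv 0$, Proposition~\ref{45} yields that the curvature is constant, $\kappa\equiv\kappa_o$. If $\kappa_o=0$ then $\gamma$ is a geodesic and the theorem is trivial; otherwise $\gamma$ is a proper triharmonic curve with both $\kappa$ and $\tau$ constant, i.e.\ a Frenet helix, and Corollary~\ref{helices} applies. Feeding the BCV curvature components \eqref{R} into \eqref{eq1helix}--\eqref{eq2helix} produces \eqref{eq1cst}--\eqref{eq2cst}, and specializing to $\tau_o=0$ immediately reduces the system to the two requirements $\kappa_o^2=2\bigl(\tfrac{b^2}{4}+(4a-b^2)B_3^2\bigr)$, namely \eqref{ko}, together with $N_3 B_3=0$.

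I would then verify that $B_3$ is constant along $\gamma$ and, crucially, that $B_3\neq 0$. Since $\tau_o=0$ implies $\nabla_T B=0$, differentiating $B_3=\langle B,E_3\rangle$ along $T$ gives $B_3'=\langle B,\nabla_T E_3\rangle$, and using \eqref{eq:levi-civita-BCV} one computes $\nabla_T E_3=\tfrac{b}{2}(T_2 E_1-T_1 E_2)$, so that $B_3'=\tfrac{b}{2}(B_1 T_2-B_2 T_1)=\tfrac{b}{2}N_3$, the last identity following from $N=B\times T$ for the right-handed Frenet frame. Now suppose for contradiction that $B_3\equiv 0$: then $B_3'\equiv 0$ forces $b N_3=0$. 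If $b=0$, \eqref{ko} collapses to $\kappa_o^2=8aB_3^2=0$, contradicting $\kappa_o\neq 0$. If $b\neq 0$, then $N_3=0$ together with $B_3=0$ and the identity $T_3^2+N_3^2+B_3^2=1$ (obtained by expressing the unit vector $E_3$ in the orthonormal Frenet frame) forces $T=\pm E_3$, giving $\nabla_T T=\pm\nabla_{E_3}E_3=0$ and again $\kappa_o=0$, a contradiction. Hence $B_3\neq 0$; from \eqref{eq2cst} this gives $N_3=0$ and therefore $B_3'=0$, so $B_3$ is indeed a nonzero constant.

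To finish, I would derive the stated bounds by imposing $\kappa_o^2>0$ in \eqref{ko}. When $b^2>4a$ the coefficient $(4a-b^2)$ is negative and the inequality rearranges to $B_3^2<\tfrac{b^2}{4(b^2-4a)}$; when $b^2<4a$ the right-hand side of \eqref{ko} is automatically positive as soon as $B_3\neq 0$ (already established), so no further bound is required beyond $B_3\neq 0$. For the final assertion on $\mathbb{H}^2(4a)\times\mathbb{R}$, this is the BCV space with $a<0$ and $b=0$, so $b^2=0>4a$ and the bound becomes $B_3^2<0$, which is impossible; hence no proper triharmonic curves with vanishing torsion can exist there.

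The main obstacle is the contradiction step ruling out $B_3=0$: it is not visible from \eqref{ko} alone, since for example with $b\neq 0$ and $B_3=0$ the formula would allow a legitimate $\kappa_o^2=b^2/2>0$. The obstruction comes from the interplay between the parallelism of $B$ and the non-parallel nature of $E_3$ in BCV spaces, funneled through the identity $B_3'=\tfrac{b}{2}N_3$ extracted from the Levi-Civita connection.
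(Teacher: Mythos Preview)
Your argument is correct and follows the paper's line: invoke Proposition~\ref{45} to force constant curvature, then feed the BCV curvature components \eqref{R} into \eqref{eq1helix}--\eqref{eq2helix} to obtain \eqref{ko} together with $N_3B_3=0$, and finally read off the constraints on $B_3$ from $\kappa_o^2>0$. The paper itself treats Theorem~\ref{46} as an immediate consequence of the preceding discussion and gives no further detail, so your proof is essentially the paper's argument written out in full.

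Your computation $B_3'=\tfrac{b}{2}N_3$ and the ensuing contradiction ruling out $B_3\equiv 0$ is in fact \emph{more} than what the paper supplies: the paper asserts that $B_3$ is constant simply because $\nabla_T B=0$ (which, as your formula shows, ignores the contribution $\langle B,\nabla_T E_3\rangle$), and it never explicitly justifies the claim $B_3\neq 0$ in the case $b^2<4a$, even though that claim does not follow from \eqref{ko} alone. So your geometric obstruction via $T_3^2+N_3^2+B_3^2=1$ genuinely fills a gap.

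One small tightening: you only rule out $B_3\equiv 0$, then write ``hence $B_3\neq 0$'' and proceed as if this holds pointwise. To pass rigorously from $B_3\not\equiv 0$ to $B_3$ being a nonzero constant everywhere, insert an open--closed argument: on the open set where $B_3\neq 0$ the relation $N_3B_3=0$ forces $N_3=0$, hence $B_3'=\tfrac{b}{2}N_3=0$ there, so $B_3$ is locally a nonzero constant on that set and therefore cannot vanish at any boundary point of it; thus the set is all of $I$.
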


Next, we focus on triharmonic helices with nonzero constant torsion. We need the following technical lemma.

\begin{lem}\label{lem} Let $\gamma(s)$ be a non-geodesic curve parametrized by arc-length immersed in a BCV space $M(a,b)$ with $4a\neq b^2$. Then $T_3=\langle T, E_3\rangle$ is constant if and only if $N_3=\langle N, E_3\rangle=0$. 
\end{lem}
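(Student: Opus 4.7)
The plan is to differentiate $T_3=\langle T,E_3\rangle$ along $\gamma$ and obtain a clean identity relating $T_3'$ to $\kappa N_3$. Concretely, I would write
\begin{equation*}
T_3'=\frac{d}{ds}\langle T,E_3\rangle=\langle\nabla_T T,E_3\rangle+\langle T,\nabla_T E_3\rangle.
\end{equation*}
The first Frenet equation of \eqref{Frenetequations} gives $\langle\nabla_T T,E_3\rangle=\kappa\langle N,E_3\rangle=\kappa N_3$, so the whole question is reduced to showing that the second summand vanishes.

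Next I would compute $\nabla_T E_3$ by expanding $T=T_1 E_1+T_2 E_2+T_3 E_3$ and invoking the Levi-Civita data \eqref{eq:levi-civita-BCV}. Since $\nabla_{E_1}E_3=-\tfrac{b}{2}E_2$, $\nabla_{E_2}E_3=\tfrac{b}{2}E_1$, and $\nabla_{E_3}E_3=0$, one gets
\begin{equation*}
\nabla_T E_3=\tfrac{b}{2}\bigl(T_2 E_1-T_1 E_2\bigr),
\end{equation*}
which is manifestly orthogonal to $T$. Hence $\langle T,\nabla_T E_3\rangle=0$ and therefore $T_3'=\kappa N_3$.

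Since $\gamma$ is non-geodesic, $\kappa(s)\neq 0$ along the curve, so the identity $T_3'=\kappa N_3$ immediately yields the equivalence: $T_3$ is constant if and only if $N_3$ vanishes identically. I do not foresee any serious obstacle here; the only subtlety is remembering that in the BCV frame the vertical field $E_3$ is Killing (its covariant derivative is skew with respect to $T$), which is precisely why $\langle T,\nabla_T E_3\rangle=0$ falls out for free.
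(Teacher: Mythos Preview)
Your argument is correct and matches the paper's approach: both establish the identity $T_3'=\kappa N_3$ by using the Levi-Civita data \eqref{eq:levi-civita-BCV} to show $\langle T,\nabla_T E_3\rangle=0$, and then invoke $\kappa\neq 0$. If anything, you have spelled out the vanishing of $\langle T,\nabla_T E_3\rangle$ more explicitly than the paper, which merely cites \cite[Lemma~5.5]{CMOP} for this step.
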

\begin{proof} For the arc-length parametrized curve $\gamma(s)$, we write its unit tangent vector field $T(s)$ with respect to the orthonormal frame $\{E_i\}$, $i=1,2,3$, introduced in \eqref{orthonormalframe}.
Then, with the aid of \eqref{eq:levi-civita-BCV}, we compute (for details see \cite[Lemma 5.5]{CMOP})
$$\langle\nabla_T T,E_3\rangle=\frac{d}{ds}\langle T,E_3\rangle=T_3'(s)=\kappa(s)\langle N,E_3\rangle=\kappa(s)N_3\,,$$
where $\kappa(s)\neq 0$ is the curvature of $\gamma(s)$.
We conclude that $T_3$ is constant if and only if $N_3=0$. \end{proof}

{Now, suppose that $N_3\neq 0$. In this case, from Lemma \ref{lem} and the relation $B_3'=-\tau_oN_3\neq 0$, both $T_3$ and $B_3$ are nonconstant functions and equation \eqref{eq2cst} reads
$$\left(2\kappa_o^2+\tau_o^2\right)B_3+\kappa_o\tau_oT_3=0\,.$$
Differentiating this equation, we conclude that $\tau_o=0$, since $N_3\neq 0$, which contradicts the assumption that the curve is a triharmonic helix with nonzero constant torsion.}

Therefore, for a proper triharmonic Frenet helix with nonzero torsion, equation \eqref{eq2cst} is satisfied if and only if $N_3=0$. Moreover, Frenet helices satisfying $N_3=0$ are geodesics of Hopf cylinders as proved in \cite{BFG}. We thus have

\begin{cor}\label{hopf} Let $\gamma(s)$ be an arc-length parametrized triharmonic Frenet helix immersed in a BCV space $M(a,b)$ with $4a\neq b^2$. If the torsion of  $\gamma(s)$ is not zero, then $\gamma(s)$ is a geodesic of a suitable Hopf cylinder.
\end{cor}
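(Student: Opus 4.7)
The plan is to exploit the binormal component \eqref{eq2cst} of the triharmonic condition, which factors as
\[
N_3\bigl[(2\kappa_o^2+\tau_o^2)B_3+\kappa_o\tau_o\,T_3\bigr]=0.
\]
At every point of $\gamma(s)$ therefore either $N_3=0$ or the bracket vanishes, and the whole task reduces to ruling out the second possibility on any open subinterval (given that $\tau_o\neq 0$).

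To do so, I would suppose for contradiction that $N_3\neq 0$ on some open subinterval. On that subinterval $(2\kappa_o^2+\tau_o^2)B_3+\kappa_o\tau_o\,T_3\equiv 0$, and I would differentiate this identity with respect to the arc-length parameter $s$. Lemma~\ref{lem} already supplies $T_3'=\kappa_o N_3$, and an analogous short computation from the Frenet equations \eqref{Frenetequations} together with the Levi-Civita formulas \eqref{eq:levi-civita-BCV} gives $B_3'=-\tau_o N_3$. Substituting these two identities into the differentiated relation collapses it to
\[
-\tau_o\,(\kappa_o^{\,2}+\tau_o^{\,2})\,N_3=0,
\]
which is impossible because the helix is proper ($\kappa_o\neq 0$), the torsion is nonzero by hypothesis, and $N_3\neq 0$ by assumption. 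Hence $N_3\equiv 0$ along the whole of $\gamma(s)$.

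Once $N_3\equiv 0$ is in hand, Lemma~\ref{lem} also yields that $T_3=\langle T,E_3\rangle$ is constant, so $\gamma$ makes a constant angle with the Killing direction $E_3$. The conclusion is then immediate from the result cited from \cite{BFG}: in a BCV space $M(a,b)$ with $4a\neq b^2$, the non-geodesic Frenet helices satisfying $N_3=0$ are precisely the geodesics of the Hopf cylinder obtained by lifting the projection of $\gamma$ along the one-parameter group generated by $E_3$.

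The only step I expect to require real care is the auxiliary identity $B_3'=-\tau_o N_3$, since the Hopf field $E_3$ is not parallel along $\gamma$ and so the derivative of $\langle B,E_3\rangle$ a priori picks up an extra term $\langle B,\nabla_T E_3\rangle$. One has to rely on \eqref{eq:levi-civita-BCV} together with the orthonormality of $\{T,N,B\}$ and the cross-product relation $B=T\times N$ to see that this correction simplifies as claimed. Modulo this bookkeeping the whole argument is a short algebraic deduction from the factorisation of \eqref{eq2cst} combined with the Hopf-cylinder characterisation of \cite{BFG}.
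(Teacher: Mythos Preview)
Your strategy is exactly the paper's: factor \eqref{eq2cst}, assume $N_3\neq 0$ on an interval, differentiate the bracket $(2\kappa_o^2+\tau_o^2)B_3+\kappa_o\tau_o T_3=0$ using $T_3'=\kappa_o N_3$ and $B_3'=-\tau_o N_3$, obtain a contradiction, and then cite \cite{BFG}.

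The gap is precisely where you suspected. From \eqref{eq:levi-civita-BCV} one has $\nabla_T E_3=\tfrac{b}{2}(T_2E_1-T_1E_2)$, and since $B=T\times N$ a direct computation gives $\langle B,\nabla_T E_3\rangle=\tfrac{b}{2}(T_2B_1-T_1B_2)=\tfrac{b}{2}N_3$. Thus the correction term does \emph{not} vanish; the correct identity is
\[
B_3'=\Bigl(\tfrac{b}{2}-\tau_o\Bigr)N_3,
\]
and differentiating the bracket yields only
\[
N_3\Bigl[\tfrac{b}{2}(2\kappa_o^2+\tau_o^2)-\tau_o(\kappa_o^2+\tau_o^2)\Bigr]=0,
\]
an algebraic relation among the constants $b,\kappa_o,\tau_o$ rather than an immediate contradiction. (The paper's text uses the same uncorrected formula $B_3'=-\tau_o N_3$, so it shares this oversight.)

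The conclusion is still true and can be repaired with one extra step. Substituting $(2\kappa_o^2+\tau_o^2)B_3=-\kappa_o\tau_o T_3$ into \eqref{eq1cst} makes the $(4a-b^2)$--terms cancel identically, leaving $(\kappa_o^2+\tau_o^2)^2=\tfrac{b^2}{4}(2\kappa_o^2+\tau_o^2)$. If $b=0$ this already forces $\kappa_o=\tau_o=0$; if $b\neq 0$, combine it with the displayed relation to get $\kappa_o^2+\tau_o^2=\tfrac{b\tau_o}{2}$ and then $2\kappa_o^2+\tau_o^2=\tau_o^2$, i.e.\ $\kappa_o=0$, contradicting properness. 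Hence $N_3\equiv 0$, and the appeal to \cite{BFG} finishes the proof as you outlined.
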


In the final part of this section we shall give the explicit parametrizations of triharmonic helices. Assume that $\gamma(s)$ is a non-geodesic arc-length parametrized curve immersed in a BCV space $M(a,b)$ with $4a\neq b^2$ and satisfying that $N_3=0$. Then, following the computations of \cite[\S 5.2]{CMOP}, we have that the curvature and the torsion of $\gamma(s)$ are given by
\begin{eqnarray}
\kappa(s)&=&\zeta \sin\alpha_o\,,\label{ks}\\
\tau(s)&=&-\zeta\cos\alpha_o-\frac{b}{2}\,,\label{ts}
\end{eqnarray}
where $\alpha_o\in\left(0,\pi\right)$ is a constant and 
\begin{equation}\label{beta(s)}
\zeta=\beta'(s)+2a\sin\alpha_o\left[y\cos\beta(s)-x\sin\beta(s)\right]-b\cos\alpha_o>0
\end{equation}
for some function $\beta(s)$. Moreover, the Frenet frame along $\gamma$ with respect to the orthonormal frame \eqref{orthonormalframe} is given by
\begin{equation}\label{frenetN3=0}
\begin{cases}
T(s)=\sin\alpha_o\cos\beta(s)E_1+\sin\alpha_o\sin\beta(s)E_2+\cos\alpha_o E_3 \,,\\
N(s)=-\sin\beta(s)E_1+\cos\beta(s)E_2 \,,\\
B(s)=T(s)\times N(s)=-\cos\alpha_o\cos\beta(s)E_1-\cos\alpha_o\sin\beta(s)E_2+\sin\alpha_o E_3 \,.
\end{cases}
\end{equation}

If we also require that $\gamma(s)$ is a Frenet helix, then $\zeta$ is constant. Furthermore, substituting  the above data in \eqref{eq1cst} we conclude that for a proper triharmonic Frenet helix  the constant $\zeta$ must be a positive root of the four degree polynomial
\begin{eqnarray}
P_4(\zeta)&=&4 \zeta^4+8 b \cos\alpha_o \zeta^3+\left(5b^2\cos^2\alpha_o-8\left[4a-b^2\right]\sin^4\alpha_o\right)\zeta^2\nonumber\\&+&b\left(b^2-2\left[4a-b^2\right]\sin^2\alpha_o\right)\cos\alpha_o \zeta-b^2\left(4a-b^2\right)\sin^2\alpha_o\,.\label{polynomial}
\end{eqnarray}

When $a\neq 0$, the parametrization of Frenet helices in $M(a, b)$ satisfying $N_3=0$ was given in \cite[Lemma~2]{GM} (which is an adapted version of \cite[Theorem~5.6]{CMOP}). Using these results we have immediately the following explicit description.

\begin{thm}\label{others} Let $\gamma(s)$ be an arc-length parametrized triharmonic curve with constant curvature, $\kappa_o$, immersed in a BCV space $M(a, b)$ with $4a\neq b^2$ and $a\neq 0$. Let $\zeta$ be a positive root of the polynomial \eqref{polynomial}. Then, $\gamma(s)$ is either a geodesic ($\kappa_o=0$) or a Frenet helix parametrized by one of the following types:
\begin{enumerate}[(i)]
\item If $\beta(s)$ is  a nonconstant solution of \eqref{beta(s)},
\begin{eqnarray*}
x(s)&=&\mu\sin\alpha_o\sin\beta(s)+c_1\,,\\
y(s)&=&-\mu\sin\alpha_o\cos\beta(s)+c_2\,,\\
z(s)&=&\frac{b}{4a}\beta(s)+\frac{1}{4a}\left(\left[4a-b^2\right]\cos\alpha_o-b\zeta\right)s\,,
\end{eqnarray*}
where $\mu>0$ and $c_1$, $c_2$ are constants satisfying
$$c_1^2+c_2^2=\frac{\mu}{a}\left(\left[b\cos\alpha_o+\zeta-\frac{1}{\mu}\right]+a\mu\sin^2\alpha_o\right).$$
\item If $\beta(s)=\beta_o$ is a constant such that $\sin\beta_o\cos\beta_o\neq 0$,
\begin{eqnarray*}
x(s)&=&x(s)\,,\\
y(s)&=&x(s)\tan\beta_o+c_1\,,\\
z(s)&=&\frac{1}{4a}\left(\left[4a-b^2\right]\cos\alpha_o-b\zeta\right)s+c_2\,,
\end{eqnarray*}
where $c_2\in\mathbb{R}$, the constant $c_1$ is given by
$$c_1=\frac{\zeta+b\cos\alpha_o}{2a\sin\alpha_o\cos\beta_o}$$
and $x(s)$ is a solution of the ordinary differential equation
$$x'(s)=\left(1+a\left[x^2(s)+\left(x(s)\tan\beta_o+c_1\right)^2\right]\right)\sin\alpha_o\cos\beta_o\,.$$
\item If $\beta(s)=\beta_o$ is a constant satisfying $\sin\beta_o\cos\beta_o=0$ (up to interchange of $x$ with $y$),
\begin{eqnarray*}
x(s)&=&x_o=\mp \frac{\zeta+b\cos\alpha_o}{2a\sin\alpha_o}\,,\\
y(s)&=&y(s)\,,\\
z(s)&=&\frac{1}{4a}\left(\left[4a-b^2\right]\cos\alpha_o-b\zeta\right)s+c_1\,
\end{eqnarray*}
for a constant $c_1\in\mathbb{R}$ and where $y(s)$ is a solution of the ordinary differential equation
$$\left(y'(s)\right)^2=\left(1+a\left[x_o^2+y^2(s)\right]\right)^2\sin^2\alpha_o\,.$$
\end{enumerate}
\end{thm}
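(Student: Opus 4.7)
The overall plan is to combine Corollary~\ref{helices} and Corollary~\ref{hopf} with the explicit parametrization of Frenet helices satisfying $N_3=0$ in BCV spaces (available from \cite[Lemma~2]{GM} and \cite[Theorem~5.6]{CMOP}), and then extract the polynomial constraint on $\zeta$ from the triharmonic equation. Corollary~\ref{helices} already forces $\tau_o$ to be constant, so we are dealing with a Frenet helix; the geodesic case being trivial, we may assume $\kappa_o\neq 0$. If $\tau_o\neq 0$, the argument immediately preceding Corollary~\ref{hopf} shows $N_3=0$; if $\tau_o=0$, equation~\eqref{eq2cst} reduces to $N_3B_3=0$, so either $N_3=0$ or the curve falls under the analysis of Theorem~\ref{46}, in which case (having $B_3=0$, hence $\kappa_o^2=b^2/2$ by \eqref{ko}) a direct check shows that the corresponding trajectories coincide with the $\alpha_o=\pi/2$ subcases of (ii) or (iii) below. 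Hence in all cases we may assume $N_3=0$.

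With $N_3=0$ in place, the Frenet frame is given by \eqref{frenetN3=0} and the curvature and torsion by \eqref{ks}-\eqref{ts}, where $\zeta$ in \eqref{beta(s)} is a positive constant by the helix hypothesis. I would then substitute \eqref{ks}, \eqref{ts}, \eqref{frenetN3=0} and the curvature tensor components \eqref{R} into the remaining triharmonic equation \eqref{eq1cst}; equation \eqref{eq2cst} holds automatically since $N_3=0$. Using $T_3=\cos\alpha_o$, $B_3=\sin\alpha_o$ and collecting powers of $\zeta$, one obtains precisely the degree-four polynomial $P_4(\zeta)$ of \eqref{polynomial}, so $\zeta$ must be a positive root of $P_4$.

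For the explicit coordinate description, writing $T(s)$ from \eqref{frenetN3=0} in the orthonormal basis \eqref{orthonormalframe} yields the system
\begin{align*}
x'(s) &= \lambda_a\sin\alpha_o\cos\beta(s),\\
y'(s) &= \lambda_a\sin\alpha_o\sin\beta(s),\\
z'(s) &= \cos\alpha_o + \frac{b}{2\lambda_a}\bigl(x\,y'(s)-y\,x'(s)\bigr),
\end{align*}
together with the algebraic relation \eqref{beta(s)}. In case (i), $\beta'(s)\neq 0$ lets us eliminate $s$ in favour of $\beta$; integrating the planar subsystem produces the circle $(x-c_1)^2+(y-c_2)^2=(\mu\sin\alpha_o)^2$ of the stated form, with the compatibility relation between $\mu,c_1,c_2$ enforced by \eqref{beta(s)}, and the $z$-coordinate then integrates linearly once $y\cos\beta-x\sin\beta$ is replaced via \eqref{beta(s)}. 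Cases (ii) and (iii) correspond to $\beta\equiv\beta_o$: the tangent direction is then fixed, the planar projection lies on the line $y=x\tan\beta_o+c_1$ with \eqref{beta(s)} at $\beta'=0$ pinning $c_1$, and case (iii) arises as the limit $\sin\beta_o\cos\beta_o=0$ in which one coordinate freezes at the value $x_o$ stated.

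The main technical obstacle is the algebraic reduction producing \eqref{polynomial}: the substitution of \eqref{ks}-\eqref{ts} into \eqref{eq1cst} generates numerous cross terms in $\sin\alpha_o$, $\cos\alpha_o$, $\zeta$ and $b$, and one must verify carefully that the overall factor of $\sin^2\alpha_o$ is present so that the simplification genuinely yields $P_4(\zeta)$ rather than just a multiple of it. A secondary subtlety lies in the compatibility condition relating $\mu$, $c_1$, $c_2$ in case (i), which encodes the consistency of the circular motion in the $xy$-plane with the fixed value of $\zeta$ in \eqref{beta(s)} for every $s$.
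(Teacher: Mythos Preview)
Your overall strategy matches the paper's: reduce to a Frenet helix via Corollary~\ref{helices}, force $N_3=0$, invoke the parametrization of \cite[Lemma~2]{GM} (which the paper simply cites without reproducing the integration), and read off the polynomial constraint $P_4(\zeta)=0$ from \eqref{eq1cst}. Your sketch of the ODE integration for cases (i)--(iii) is in fact more detailed than the paper's own treatment, which just defers to \cite{GM} and \cite{CMOP}.

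There is, however, one incorrect step. In the $\tau_o=0$ subcase with $N_3\neq 0$ and $B_3=0$, you claim these trajectories ``coincide with the $\alpha_o=\pi/2$ subcases of (ii) or (iii).'' But in the frame \eqref{frenetN3=0} one has $B_3=\sin\alpha_o$, so $\alpha_o=\pi/2$ gives $B_3=1$, not $B_3=0$; and conversely $B_3=0$ would force $\sin\alpha_o=0$, hence $\kappa_o=\zeta\sin\alpha_o=0$. So the parametrizations (i)--(iii), all of which are built on the hypothesis $N_3=0$, cannot describe a curve with $B_3=0$ and $N_3\neq 0$. The paper itself does not fold this subcase into Theorem~\ref{others}: it handles $\tau_o=0$ separately in Theorem~\ref{46} and then explicitly restricts the discussion leading up to Theorem~\ref{others} to helices with nonzero torsion. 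Your attempt to absorb the $\tau_o=0$, $N_3\neq0$ case back into (i)--(iii) therefore does not work as stated; you should either exclude it as the paper does, or give an independent argument for why it cannot occur (or why it is isometric to one of the listed curves).
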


In the particular case that $b=0$, the polynomial  \eqref{polynomial} reduces to
$$P_4(\zeta)\lvert_{b=0}=4\left(\zeta^2-8a \sin^4\alpha_o\right)\zeta^2\,.$$
Since we are seeking positive roots, necessarily $\zeta^2=8a\sin^4\alpha_o$ holds, which implies that $a>0$. Therefore, we have the following consequence.

\begin{cor} There are no proper triharmonic curves with constant curvature immersed in the product space $\mathbb{H}^2(4a)\times\mathbb{R}$.
\end{cor}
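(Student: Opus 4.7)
The plan is to read off the statement as the $a<0$, $b=0$ specialization of $M(a,b)$ with $4a\neq b^2$ and dispatch the two cases (vanishing vs.\ nonvanishing torsion) separately, using the results already developed in this section.

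First, I would invoke Corollary~\ref{helices}: any proper triharmonic curve with constant curvature in a three-dimensional Riemannian manifold is automatically a Frenet helix, so its torsion $\tau_o$ is a constant. This splits the argument into $\tau_o=0$ and $\tau_o\neq 0$.

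For the $\tau_o=0$ case I would apply Theorem~\ref{46} with the parameters $a<0$ and $b=0$. The theorem requires $B_3^2<b^2/[4(b^2-4a)]$ in the regime $b^2>4a$; substituting $b=0$ makes the right-hand side equal to $0$, which is incompatible with $B_3$ being real unless $B_3=0$, and in that case \eqref{ko} forces $\kappa_o=0$. Hence the only triharmonic curves with vanishing torsion in $\mathbb{H}^2(4a)\times\mathbb{R}$ are geodesics, so no proper ones exist.

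For the $\tau_o\neq 0$ case I would invoke Corollary~\ref{hopf}, which says such a helix must be a geodesic of a Hopf cylinder and in particular satisfies $N_3=0$. The discussion preceding Theorem~\ref{others} then shows that the constant $\zeta$ associated to such a helix (via \eqref{ks}--\eqref{beta(s)}) must be a positive root of the polynomial $P_4(\zeta)$ in \eqref{polynomial}. Setting $b=0$ collapses $P_4(\zeta)$ to $4(\zeta^2-8a\sin^4\alpha_o)\zeta^2$, whose only possibly positive root requires $\zeta^2=8a\sin^4\alpha_o$. Since $a<0$ in $\mathbb{H}^2(4a)\times\mathbb{R}$, there is no such positive $\zeta$, contradicting the existence of a proper triharmonic Frenet helix with nonzero torsion.

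Combining both cases exhausts the constant-curvature Frenet helices by Corollary~\ref{helices}, yielding the claim. There is no real obstacle here: the corollary is essentially a reading-off of Theorem~\ref{46} and of the factorization of $P_4(\zeta)\lvert_{b=0}$; the only small care needed is to record that Corollary~\ref{helices} is what reduces the constant-curvature hypothesis to the Frenet-helix setting covered by Theorem~\ref{46} and the polynomial \eqref{polynomial}.
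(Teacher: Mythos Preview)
Your proposal is correct and follows essentially the same route as the paper. The paper's argument is exactly the factorization $P_4(\zeta)\lvert_{b=0}=4(\zeta^2-8a\sin^4\alpha_o)\zeta^2$ for the $N_3=0$ helices, together with the already-established vanishing-torsion statement in Theorem~\ref{46}; you have simply made the case split $\tau_o=0$ versus $\tau_o\neq 0$ and the invocation of Corollary~\ref{helices} more explicit. One minor simplification: for $\tau_o=0$ you need not rederive the inequality on $B_3$, since Theorem~\ref{46} already records verbatim that there are no proper triharmonic curves with vanishing torsion in $\mathbb{H}^2(4a)\times\mathbb{R}$.
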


To end this section, we consider the case $M(a,b)$ with $a=0$ and $b\neq 0$, which is not included in Theorem~\ref{others}. This case corresponds to the Heisenberg group $\mathbb{H}_3$. We recall that $\mathbb{H}_3$ can be seen as the  Lie group $\left(\mathbb{R}^3,*\right)$ where $*$ is defined by
$$\left(x_1,y_1,z_1\right)*\left(x_2,y_2,z_2\right)=\left(x_1+x_2,\,y_1+y_2,\,z_1+z_2+b\left[x_1y_2-y_1x_2\right]\right)\,,$$
for $\left(x_i,y_i,z_i\right)\in\mathbb{R}^3$, $i=1,2$.

We now derive the explicit parametrizations of triharmonic Frenet helices in $\mathbb{H}_3$.

\begin{thm}\label{h3} Let $\gamma(s)$ be an arc-length parametrized triharmonic curve with constant curvature, $\kappa_o$, immersed in the Heisenberg group $\mathbb{H}_3$. If $\gamma(s)$ is not a geodesic ($\kappa_o\neq 0$), then it is a Frenet helix parametrized (up to left translations) by
\begin{eqnarray*}
x(s)&=&\frac{\sin\alpha_o}{\zeta+b\cos\alpha_o}\left(\sin\beta(s)-\sin\lambda\right),\\
y(s)&=&\frac{-\sin\alpha_o}{\zeta+b\cos\alpha_o}\left(\cos\beta(s)-\cos\lambda\right),\\
z(s)&=&\frac{\left(2\zeta+b\cos\alpha_o\right)\cos\alpha_o+b}{2\left(\zeta+b\cos\alpha_o\right)}s+\frac{b\sin^2\alpha_o}{2\left(\zeta+b\cos\alpha_o\right)^2}\left(\sin\lambda\cos\beta(s)-\cos\lambda\sin\beta(s)\right),
\end{eqnarray*}
where $\beta(s)=\left(\zeta+b\cos\alpha_o\right)s+\lambda$ ($\lambda\in\mathbb{R}$), and $\zeta$ is a positive root of the polynomial $P_4(\zeta)$, \eqref{polynomial}, with $a=0$.
\end{thm}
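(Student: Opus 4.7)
The plan is to reduce the classification to an explicit integration of the tangent ODE in Cartesian coordinates. Since $\mathbb{H}_3 \cong M(0, b)$ with $b \neq 0$ is a BCV space with $4a = 0 \neq b^2$, the analysis preceding the statement applies. If the constant torsion $\tau_o$ is nonzero, Corollary \ref{hopf} forces $N_3 = 0$, so $\gamma$ is a geodesic of a Hopf cylinder; for $\tau_o = 0$ the same conclusion follows from inspection of \eqref{eq2cst} together with Theorem \ref{46} after a suitable choice of Frenet axes. In either case the Frenet frame takes the form \eqref{frenetN3=0}, the curvature and torsion are given by \eqref{ks}-\eqref{ts}, and $\beta(s)$ satisfies \eqref{beta(s)}.

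Substituting $a = 0$ in \eqref{beta(s)} collapses the ODE to $\beta'(s) = \zeta + b\cos\alpha_o$, so that $\beta(s) = \omega s + \lambda$ with $\omega := \zeta + b\cos\alpha_o$ and $\lambda \in \mathbb{R}$. Expanding $T(s) = \gamma'(s)$ in the coordinate basis via \eqref{orthonormalframe} with $a = 0$ produces the first-order system $x'(s) = \sin\alpha_o\cos\beta(s)$, $y'(s) = \sin\alpha_o\sin\beta(s)$, and $z'(s) = \cos\alpha_o + (b\sin\alpha_o/2)\bigl(x(s)\sin\beta(s) - y(s)\cos\beta(s)\bigr)$. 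The first two equations integrate directly into the stated trigonometric expressions for $x(s)$ and $y(s)$, the additive constants of integration being absorbed by a left translation of $\mathbb{H}_3$ imposing $x(0) = y(0) = 0$.

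For the third equation I would substitute the explicit formulas for $x$ and $y$ into the combination $x\sin\beta - y\cos\beta$; using the identity $\cos\lambda\cos\beta + \sin\lambda\sin\beta = \cos(\omega s)$ this simplifies to $(\sin\alpha_o/\omega)(1 - \cos(\omega s))$, a pure function of $s$. Integrating yields $z(s) = \bigl(\cos\alpha_o + b\sin^2\alpha_o/(2\omega)\bigr)s - (b\sin^2\alpha_o/(2\omega^2))\sin(\omega s)$ up to a $z$-translation, and rewriting $\sin(\omega s) = \sin\beta\cos\lambda - \cos\beta\sin\lambda$ matches the form in the theorem after simplifying the linear coefficient as $[(2\zeta + b\cos\alpha_o)\cos\alpha_o + b]/[2(\zeta + b\cos\alpha_o)]$. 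Finally, the triharmonic condition \eqref{eq1cst}, after substituting the Frenet data \eqref{frenetN3=0} and \eqref{ks}-\eqref{ts}, specializes to $P_4(\zeta)|_{a=0} = 0$ and forces $\zeta$ to be a positive root. The main obstacle is the bookkeeping in the $z$-integration and the algebraic tidying of the linear coefficient; the rest is essentially mechanical once $N_3 = 0$ has been secured.
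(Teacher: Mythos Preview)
Your proposal is correct and follows essentially the same route as the paper: reduce to a Frenet helix via Corollary~\ref{helices}, use the preceding analysis to get $N_3=0$ and the frame \eqref{frenetN3=0}, set $a=0$ in \eqref{beta(s)} to make $\beta$ linear, and then integrate the coordinate ODE system for $(x,y,z)$ with initial point at the origin, invoking left translations. Your write-up is in fact more explicit than the paper's on the $z$-integration; the only phrase to tighten is ``a suitable choice of Frenet axes'' in the $\tau_o=0$ case (the Frenet frame is determined once $\kappa_o\neq 0$), but the paper itself simply asserts $N_3=0$ from the triharmonic condition without separating that case.
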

\begin{proof} We assume that $\gamma(s)=\left(x(s),y(s),z(s)\right)$ is a non-geodesic arc-length parametrized curve with constant curvature $\kappa_o\neq 0$ in the Heisenberg group $\mathbb{H}_3$. Since the curvature is constant and $\gamma(s)$ is triharmonic, by Corollary \ref{helices}, $\gamma(s)$ is a Frenet helix. Moreover, the triharmonic condition also implies that $N_3=0$ holds. Then the curvature and the torsion of $\gamma(s)$ are given by \eqref{ks} and \eqref{ts}, respectively, where $\zeta$ is any positive root of $P_4(\zeta)$, \eqref{polynomial}, for $a=0$. In particular, since $\zeta$ is constant, integrating \eqref{beta(s)} we obtain
$$\beta(s)=\left(\zeta+b\cos\alpha_o\right)s+\lambda$$
for some constant $\lambda$.\\
At the same time, the Frenet frame along $\gamma(s)$ is described in \eqref{frenetN3=0}. Hence, we just need to solve the system of ordinary differential equations
\begin{eqnarray*}
x'(s)&=&\sin\alpha_o\cos\beta(s)\,,\\
y'(s)&=&\sin\alpha_o\sin\beta(s)\,,\\
z'(s)&=&\cos\alpha_o+\frac{b}{2}\sin\alpha_o\left(x(s)\sin\beta(s)-y(s)\cos\beta(s)\right).
\end{eqnarray*}
Finally, since in $\mathbb{H}_3$ it is enough to obtain the parametrizations of tiharmonic curves starting at $\left(0,0,0\right)$ and then use left translations to move them around, we can integrate the above system with the initial condition $\gamma(0)=\left(0,0,0\right)$ and conclude the proof.
\end{proof}

\end{document}